\documentclass[11pt,reqno]{amsproc}

\title[Contrast between Lagrangian and Eulerian regularity of Euler equations]{Contrast between Lagrangian and Eulerian analytic regularity properties of Euler equations}
\author{Peter Constantin}
\author{Igor Kukavica}
\author{Vlad Vicol}

\address{Princeton University, Princeton, NJ 08544}
\email{const@math.princeton.edu}

\address{University of Southern California, Los Angeles, CA 90089}
\email{kukavica@usc.edu}

\address{Princeton University, Princeton, NJ 08544}
\email{vvicol@math.princeton.edu}


\usepackage[margin=1.25in]{geometry}
\usepackage{amsmath, amsthm, amssymb}
\usepackage{color}
\usepackage{times}

\newcommand\aaa{Y}

\theoremstyle{plain}
\newtheorem{theorem}{Theorem}[section]

\theoremstyle{definition}
\newtheorem{remark}[theorem]{Remark}

\numberwithin{equation}{section}

\def\RR{{\mathbb R}}
\def\EE{{\mathcal E}}
\def\HH{{\mathcal H}}

\def\TT{{\mathbb T}}
\def\ZZ{{\mathbb Z}}
\def\eps{{\varepsilon}}
\def\phi{{\varphi}}

\def\curl{\mathop{\hbox{curl}} \nolimits}
\def\ddiv{\mathop{\hbox{div}} \nolimits}
\def\supp{\mathop{\hbox{supp}} \nolimits}
\def\arctanh{\mathop{\hbox{arctanh}} \nolimits}


\def\comma{ {,\qquad{}} }            


\begin{document}


\begin{abstract}
We consider the incompressible Euler equations on $\RR^d$ or $\TT^d$, where $d \in \{ 2,3 \}$. We prove that:\\
(a) In Lagrangian coordinates the equations are locally well-posed in spaces with {\em fixed real-analyticity radius} (more generally, a fixed Gevrey-class radius).\\
(b) In Lagrangian coordinates the equations are locally well-posed in {\em highly anisotropic spaces}, e.g.~Gevrey-class regularity in the label $a_1$ and Sobolev regularity in the labels $a_2,\ldots,a_d$.\\
(c) In Eulerian coordinates both results (a) and (b) above are false. 
\hfill {July 13, 2015}.
\end{abstract}


\subjclass[2000]{35Q35, 35Q30, 76D09}

\keywords{Euler equations, Lagrangian and Eulerian coordinates, analyticity, Gevrey-class.}

\maketitle

\section{Introduction}
\label{sec:intro}

The Euler equations for ideal incompressible fluids have two formulations, the Eulerian and the Lagrangian one (apparently both due to Euler~\cite{Euler1757}).  In the Eulerian formulation the unknown functions are velocity and pressure, recorded at fixed locations in space. Their time evolution is determined by equating the rates of change of momenta to the forces applied, which in this case are just internal isotropic forces maintaining the incompressible character of the fluid. In the Lagrangian formulation the main unknowns are the particle paths, the trajectories followed by ideal particles labeled by their initial positions. The Eulerian and Lagrangian formulations are equivalent in a smooth regime in which the velocity is in the H\"older class ${\mathcal C}^s$, where $s>1$. The particle paths are just the characteristics associated to the Eulerian velocity fields. 

In recent years it was proved~\cite{Chemin92,Gamblin94,Serfati95b,Sueur11,GlassSueurTakahashi12,Shnirelman12,Nadirashvili13,FrischZheligovsky14a,ZheligovskyFrisch14,ConstantinVicolWu14} that the Lagrangian paths are time-analytic, even in the case in which the Eulerian velocities are only ${\mathcal C}^s$, with $s>1$. In contrast, if we view the Eulerian solution as a function of time with values in ${\mathcal C}^s$, then this function is everywhere discontinuous for generic initial data~\cite{CheskidovShvydkoy10,HimonasAlexandrouMisiolek10,MisiolekYoneda12,MisiolekYoneda14}. This points to a remarkable difference between the Lagrangian and Eulerian behaviors, in the not-too-smooth regime.

In this paper we describe a simple but astonishing difference of behaviors in the analytic regime: The radius of analyticity is locally in time conserved in the Lagrangian formulation (Theorem~\ref{thm:isotropic}), but may deteriorate instantaneously in the Eulerian one (Remark~\ref{rem:analyticity}). Moreover, the Lagrangian formulation allows solvability in anisotropic classes, e.g.~functions which have analyticity in one variable, but are not analytic in the others (Theorem~\ref{thm:anisotropic}). In contrast, the Eulerian formulation is ill-posed in such functions spaces (Theorem~\ref{thm:counterexample}).

\subsection{Velocity in Lagrangian coordinates}

We consider the Cauchy problem for the incompressible homogeneous Euler equations
\begin{align}
u_t + u\cdot \nabla u+ \nabla p&=0 \label{eq:E:1}\\
\nabla \cdot u&=0 \label{eq:E:2}\\
u(x,0) &=u_0(x) \label{eq:E:3}
\end{align}
where $(x,t) \in \RR^d \times [0,\infty)$, and $d\in \{2,3\}$. 
In order to state our main results, we first rewrite the Euler equations in Lagrangian coordinates. Define the particle flow map $X$ by
\begin{align}
\partial_t X(a,t) &= u(X(a,t),t) \label{eq:eta:1}\\ 
X(a,0) &= a \label{eq:eta:2}
\end{align}
where $t\geq 0$, and $a\in \RR^d$ is the Lagrangian label.
The Lagrangian velocity $v$ and the pressure $q$ are obtained by composing with $X$, i.e., 
\begin{align*} 
v(a,t) &= u(X(a,t),t) \\
q(a,t) &= p(X(a,t),t).
\end{align*}
The Lagrangian formulation of the Euler equations \eqref{eq:E:1}--\eqref{eq:E:3} is given in components by
\begin{align}
\partial_t v^{i}  + \aaa^{k}_{i}  \partial_{k} q   &= 0 \label{eq:EL:1}
\comma i=1,\ldots,d
\\
\aaa^{k}_{i} \partial_{k}   v^{i}   &= 0  \label{eq:EL:2}
\end{align}
where we have used the summation convention on repeated indices. The derivatives $\partial_k$ are with respect to the label direction $a_k$ and  $\aaa_{i}^{k}$ represents the $(k,i)$ entry of the matrix inverse of the Jacobian of the particle map, i.e.,
\begin{align*} 
\aaa(a,t) = (\nabla_a X(a,t))^{-1}.
\end{align*}
We henceforth drop the index $a$ on gradients, as it will be clear from the context when the gradients are taken with respect to Lagrangian variables $a$ or with respect to the Eulerian variable $x$.
From \eqref{eq:E:2} it follows that $\det(\nabla X) = 1$, and thus, differentiating $\partial_t X = v$ with respect to labels, and inverting the resulting matrix, we obtain 
\begin{align} 
\aaa_{t} = - \aaa (\nabla v ) \aaa.  \label{eq:EL:3}
\end{align}
The closed system for $(v,q,\aaa)$ is supplemented with the initial conditions 
\begin{align*}
v(a,0) &= v_0(a) = u_0(a) \\
\aaa(a,0) &= I
\end{align*}
where $I$ is the identity matrix. In the smooth category, the Lagrangian equations \eqref{eq:EL:1}--\eqref{eq:EL:3} are equivalent to the Eulerian ones \eqref{eq:E:1}--\eqref{eq:E:3}.

\subsection{Vorticity in Lagrangian coordinates}
For $d=2$ the Eulerian scalar vorticity $\omega = \nabla^\perp \cdot u$ is conserved along particle trajectories, that is, the Lagrangian vorticity 
\begin{align*} 
\zeta (a,t) = \omega (X(a,t),t) 
\end{align*}
obeys 
\begin{align}
\zeta (a,t) = \omega_0(a) \label{eq:vort:2D}
\end{align}
for $t\geq 0$. The Lagrangian velocity $v$ may then be computed
from the Lagrangian vorticity $\zeta$
using the elliptic curl-div system
\begin{align}
\eps_{ij} \aaa_i^k \partial_k v^j &=  \aaa^{k}_{1} \partial_{k} v^{2} - \aaa^{k}_{2} \partial_{k} v^{1} = \zeta = \omega_0 \label{eq:vort:2D:1}\\
\aaa_i^k \partial_k v^i &= \aaa^{k}_{1} \partial_{k} v^{1} + \aaa^{k}_{2} \partial_{k} v^{2} = 0\label{eq:vort:2D:2}
\end{align}
where $\eps_{ij}$ is the sign of the permutation $(1,2) \mapsto (i,j)$.
The equation \eqref{eq:vort:2D:1} above represents the conservation of the Lagrangian vorticity, while \eqref{eq:vort:2D:2} stands for the Lagrangian divergence-free condition. Note that the right sides of \eqref{eq:vort:2D:1}--\eqref{eq:vort:2D:2} are {time independent}.

For $d=3$ the Eulerian vorticity vector $\omega = \nabla \times u$ is not conserved along particle trajectories, and the replacement of \eqref{eq:vort:2D} is the vorticity transport formula
\begin{align} 
\zeta^i (a,t) = \partial_k X^i(a,t) \omega_0^k(a) \label{eq:vort:3D}.
\end{align}
Thus, in three dimensions, the elliptic curl-div system becomes
\begin{align} 
\eps_{ijk}  \aaa_j^l \partial_l v^k &= \zeta^i = \partial_k X^i \omega_0^k \label{eq:vort:3D:bad}\\
\aaa_i^k \partial_k v^i &=0\label{eq:vort:3D:2}
\end{align}
where $\epsilon_{ijk}$ denotes the standard antisymmetric tensor.
In order to make use of the identity \eqref{eq:vort:3D:bad}, we need to reformulate it so that the right side is time-independent, in analogy to the two-dimensional case. 
Multiplying \eqref{eq:vort:3D:bad}
with 
$Y_{i}^{m}$ and summing in $i$, we get
  \begin{align} 
   \eps_{ijk}  \aaa^m_i \aaa_j^l \partial_l v^k &= \omega_0^m \label{eq:vort:3D:1}
   \comma m=1,2,3,
  \end{align}
which is a form of the Cauchy identity
containing only $\aaa$.
Recall here 
the standard Cauchy invariants~\cite{Cauchy1827,ZheligovskyFrisch14}
\begin{align} 
\eps_{ijk} \partial_j v^l \partial_k X^l = \omega_0^i
   \comma i=1,2,3,
\label{eq:Cauchy}
\end{align}
which can be obtained by taking the Lagrangian curl of the Weber
formula~\cite{Weber68,Constantin01a}. 
Thus, for $d=3$ we solve \eqref{eq:vort:3D:2} and \eqref{eq:vort:3D:1} for $\nabla v$ in terms of $\aaa$ and $\omega_0$. Note that, as in the $d=2$ case, this system has a right side which is time independent.

\subsection{Isotropic and anisotropic Lagrangian Gevrey spaces}
First we recall the definition of the Gevrey spaces. Fix $r > d/2$, so that $H^r(\RR^d)$ is an algebra (we may replace $H^r(\RR^d)$ with $W^{r,p}(\RR^d)$ for $r > d/p$ and $p \in (1,\infty))$. 
For a Gevrey-index $s\geq1$ and Gevrey-radius $\delta>0$, we denote the isotropic Gevrey norm by
\begin{align}
\|f\|_{G_{s,\delta}} = \sum_{\beta \geq 0} \frac{\delta^{|\beta|}}{|\beta|!^s} \| \partial^\beta f\|_{H^r} 
= \sum_{m\geq 0}   \frac{\delta^m }{m!^s} \sum_{|\beta| = m} \| \partial^\beta f\|_{H^r} 
\label{eq:G:s:delta}
\end{align}
where $\beta \in {\mathbb N}_0^d$ is a multi-index.
Also, let
$G_{s,\delta}$ be the set of functions for which the above norm
is finite.
When $s=1$ this set consists of analytic functions extendable 
analytically to the strip of radius $\delta$, and which are bounded uniformly in this strip (the latter property is encoded in the summability property of the norm).

Similarly, given a coordinate  $j \in \{ 1,\ldots,d\}$, we define the anisotropic $s$-Gevrey norm with radius $\delta>0$  by 
\begin{align*}
\|f\|_{G_{s,\delta}^{(j)}} 
= \sum_{m\geq 0}   \frac{\delta^m}{ m!^s}   \| \partial^m_j f\|_{H^r} 
\end{align*}
that is, among all multi-indices $\beta$ with $|\beta|=m$, we only consider $\beta = (\beta_k)$ with $\beta_k= m \delta_{jk}$, where $\delta_{jk}$ is as usual the Kronecker symbol.

\subsection{Main results}
We have the following statement asserting persistence of the Gevrey radius for solutions of the Lagrangian Euler equation. 

\begin{theorem}[\bf Persistence of the Lagrangian Gevrey radius] \label{thm:isotropic}
Assume that $v_0 \in L^2$ and
\begin{align*}
\nabla v_0  \in G_{s,\delta}
\end{align*}
for some Gevrey-index $s\geq 1$ and a Gevrey-radius $\delta>0$.
Then there exists $T>0$ and a unique solution $ (v,\aaa) \in C([0,T];H^{r+1}) \times C([0,T],H^{r})$ of the Lagrangian Euler system \eqref{eq:EL:1}--\eqref{eq:EL:3}, which moreover satisfies
\begin{align*}
\nabla v, \aaa \in L^{\infty}([0,T], G_{s,\delta}).
\end{align*}
\end{theorem}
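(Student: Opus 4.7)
The starting observation, already highlighted in the Lagrangian reformulation preceding the statement, is that the div--curl system \eqref{eq:vort:2D:1}--\eqref{eq:vort:2D:2} (for $d=2$) and \eqref{eq:vort:3D:1}--\eqref{eq:vort:3D:2} (for $d=3$) determines $\nabla v$ from $\aaa$ (and, for $d=3$, from $\nabla X$, which is recovered from $\aaa$ by matrix inversion since $\det \nabla X=1$) with a \emph{time-independent right-hand side} $\omega_0$. The plan is to rewrite the system as the closed pair
\begin{align*}
\aaa_t = -\aaa(\nabla v)\aaa,\qquad \aaa(\cdot,0)=I,\qquad \nabla v = \mathcal{L}[\aaa]\omega_0,
\end{align*}
where $\mathcal{L}[\aaa]$ is the solution operator of the variable-coefficient div--curl system, and to run a Picard-type iteration in the Banach space $X_T=\{(v,\aaa): \nabla v,\aaa-I\in L^\infty([0,T];G_{s,\delta})\}$, with $T$ small. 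The point is that we fix the Gevrey radius $\delta$ throughout the iteration, rather than shrinking it with time as in classical Cauchy--Kovalevskaya arguments.

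\textbf{Algebra and elliptic step.} The first ingredient I would establish is that, for $r>d/2$, $G_{s,\delta}$ is a Banach algebra: the Leibniz identity $\partial^\beta(fg)=\sum_{\gamma\le\beta}\binom{\beta}{\gamma}\partial^\gamma f\,\partial^{\beta-\gamma}g$, combined with $\binom{\beta}{\gamma}/(|\beta|!^s)\le 1/(|\gamma|!^s|\beta-\gamma|!^s)$ for $s\ge 1$, yields $\|fg\|_{G_{s,\delta}}\le C\|f\|_{G_{s,\delta}}\|g\|_{G_{s,\delta}}$ by the $H^r$ algebra property. Next, for $\aaa$ close to $I$ in $G_{s,\delta}$ (which holds on a short time interval since $\aaa(\cdot,0)=I$), I would invert the variable-coefficient elliptic system by writing $\aaa=I+Z$, moving the $Z$ terms to the right-hand side, and iterating a Neumann-type series; the algebra estimate gives
\begin{align*}
\|\nabla v\|_{G_{s,\delta}}\le C\bigl(\|\omega_0\|_{G_{s,\delta}}+\|Z\|_{G_{s,\delta}}\|\nabla v\|_{G_{s,\delta}}\bigr),
\end{align*}
so that on a time interval on which $\|Z\|_{G_{s,\delta}}\le 1/(2C)$ one obtains $\|\nabla v\|_{G_{s,\delta}}\le 2C\|\omega_0\|_{G_{s,\delta}}\le 2C\|\nabla v_0\|_{G_{s,\delta}}$. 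For $d=3$ the same estimate controls $\nabla X(\cdot,t)=I+\int_0^t \nabla v\,ds$ in $G_{s,\delta}$ without loss of radius.

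\textbf{Transport of $\aaa$ and closure.} The second ingredient is the ODE estimate for $\aaa$: from $\aaa_t=-\aaa(\nabla v)\aaa$ and the algebra property,
\begin{align*}
\frac{d}{dt}\|\aaa\|_{G_{s,\delta}}\le C\|\aaa\|_{G_{s,\delta}}^2\|\nabla v\|_{G_{s,\delta}},
\end{align*}
so $\|\aaa(t)-I\|_{G_{s,\delta}}\le C t\,M$ on the interval where $\|\aaa\|_{G_{s,\delta}}\le 2\|I\|_{G_{s,\delta}}$, with $M$ the a priori bound on $\|\nabla v\|_{G_{s,\delta}}$. Choosing $T$ sufficiently small in terms of $\|\nabla v_0\|_{G_{s,\delta}}$ closes the a priori bounds, and the standard Picard scheme converges in $X_T$ to a unique fixed point; the $H^{r+1}\times H^r$ time-continuity follows a posteriori from the evolution equations \eqref{eq:EL:1}--\eqref{eq:EL:3}, using $\|v\|_{H^{r+1}}\lesssim \|v\|_{L^2}+\|\nabla v\|_{H^r}\le \|v\|_{L^2}+\|\nabla v\|_{G_{s,\delta}}$ (and the $L^2$ norm of $v$ is conserved in Lagrangian coordinates because $X$ is volume preserving).

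\textbf{Main obstacle.} The crux is the Gevrey elliptic estimate: one must apply $\partial^\beta$ to the div--curl system, commute $\partial^\beta$ past the $\aaa^k_i\partial_k$ operator, and control the commutator $[\partial^\beta,\aaa^k_i]\partial_k v^i$ after weighting by $\delta^{|\beta|}/|\beta|!^s$ and summing over $\beta$. The Leibniz expansion produces terms $\partial^\gamma\aaa\,\partial^{\beta-\gamma}\nabla v$ with combinatorial coefficients, and the decisive inequality is precisely the one used for the algebra property; without the $s\ge 1$ hypothesis and without the time-independence of the right-hand side $\omega_0$ this step would force a shrinkage of $\delta$ in time. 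It is the combination of these two features---the algebra structure of $G_{s,\delta}$ and the time-independence of the Lagrangian vorticity---that allows the radius $\delta$ to be preserved, which is the whole content of the theorem and the contrast advertised in the abstract.
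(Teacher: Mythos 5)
Your proposal is correct and follows essentially the same route as the paper: the engine in both cases is the time-independence of the Lagrangian vorticity in the div--curl system, the Helmholtz estimate $\|\nabla v\|_{H^r}\lesssim\|\curl v\|_{H^r}+\|\ddiv v\|_{H^r}$ with the $\aaa-I$ terms absorbed for short time, the transport bound for $\aaa-I$, and the combinatorial inequality $\binom{\beta}{\gamma}\le\bigl(|\beta|!/(|\gamma|!\,|\beta-\gamma|!)\bigr)^{s}$ for $s\ge1$ --- your Banach-algebra packaging of $G_{s,\delta}$ plus a Picard fixed point is just a tidier wrapping of the paper's level-by-level recursion for $V_m,Z_m$ summed via discrete Young, followed by its mollification/truncation argument. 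The only points to polish are cosmetic: on $\RR^d$ the constant matrix $I$ is not in $H^r$, so the quadratic transport term must be expanded in $Z=\aaa-I$ (giving $(1+\|Z\|)^2\|\nabla v\|$ rather than $\|\aaa\|^2\|\nabla v\|$), and in $d=3$ the paper prefers the Cauchy-invariant form \eqref{eq:vort:3D:1} with a genuinely time-independent right side over your equivalent use of \eqref{eq:vort:3D:bad} with $\nabla X=I+\int_0^t\nabla v$.
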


On the other hand, if the uniform analyticity radius of the solution $u(x,t)$ of \eqref{eq:E:1}--\eqref{eq:E:3} is measured with respect to the Eulerian coordinate $x$, then this radius is in general not conserved in time, as may be seen in the following example.

\begin{remark}[\bf Decay of the Eulerian analyticity radius] \label{rem:analyticity}
We recall from Remark 1.3 in~\cite{KukavicaVicol11b} that  there exist solutions to \eqref{eq:E:1}--\eqref{eq:E:3} whose Eulerian real-analyticity radius decays in time. Consider the explicit shear flow example (cf.~\cite{DiPernaMajda87a,BardosTiti10}) given by
\begin{align}\label{eq:DiPernaMajda}
  u(x,t) = ( f(x_2), 0 , g(x_1 - t f(x_2)) )
\end{align}
which satisfies \eqref{eq:E:1}--\eqref{eq:E:2} with vanishing pressure in $d=3$, for smooth $f$ and $g$.
For $s=1$ we may for simplicity consider the domain to be the periodic box $[0,2\pi]^3$, and
{let} 
\[
f(y) = \sin(y) \qquad \mbox{and} \qquad 
g(y) = \frac{1}{\sinh^2(1) + \sin^2(y)}.
\]
It is easily verified that the uniform in $x_1$ and $x_2$ real-analyticity radius of $u(x,t)$ decays as 
\[
\frac{1}{t+1}
\] 
for all $t>0$, and is thus not conserved. 
Note however that the above example does not provide the necessary counterexample to Theorem~\ref{thm:isotropic},
since $g$ does not belong to the periodic version of $G_{1,1}$. Indeed, $(-1)^n g^{(2n)}(0) \geq (2n!)/4$, and thus the series defining $\|g\|_{G_{1,1}}$ is not summable.
\end{remark}

The next statement shows indeed that  Theorem~\ref{thm:isotropic} does not hold in the Eulerian setting.

\begin{theorem}\label{thm:example}
There exists a smooth periodic 
divergence-free function $u_0$
such that 
\begin{align}
\| u_0\|_{G_{1,1}} < \infty \label{eq:IC}
\end{align}
and such that 
\begin{align}
\| u(t) \|_{G_{1,1}}  = \infty \label{eq:t>0}
\end{align}
for any $t>0$.
\end{theorem}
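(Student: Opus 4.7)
The plan is to adapt the DiPerna--Majda shear flow of Remark~\ref{rem:analyticity}, but with the profile $g$ chosen so that the initial datum actually lies in $G_{1,1}$ (unlike the $g$ of that remark). Work on $\TT^3$ and take
\[
u_0(x) = (\sin(x_2),\,0,\,g(x_1)),\qquad u(x,t) = (\sin(x_2),\,0,\,g(x_1 - t\sin x_2)),
\]
which is an exact pressureless Euler solution, divergence-free by inspection. For the profile, set $\hat g(n) = e^{-|n|}\langle n\rangle^{-N}$ for $n \neq 0$, with $N$ large in terms of $r$. Then $\|\partial^m g\|_{H^r}^2 \sim \sum_n n^{2m}\langle n\rangle^{2r-2N}e^{-2|n|}$, which by a Laplace-method estimate (peak at $n \sim m$) is of order $(m!)^2 m^{2r-2N-1/2}$, so $\|\partial^m g\|_{H^r}/m!\sim m^{r-N-1/4}$. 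The series $\sum_m m^{r-N-1/4}$ converges for $N > r + 3/4$, giving $g \in G_{1,1}$ and hence $\|u_0\|_{G_{1,1}}<\infty$.

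For the loss of $G_{1,1}$-regularity at positive times, use the Jacobi--Anger expansion
\[
e^{-int\sin x_2} = \sum_{k\in\ZZ} J_k(-nt)\,e^{ikx_2},
\]
so the two-dimensional Fourier coefficients of $u_3(\cdot,t)$ are $\hat g(n)\,J_k(-nt)$. The key Bessel lower bound is $\sum_{k\in\ZZ} J_k(z)^2 e^{2|k|} \gtrsim e^{2z\sinh 1}/\sqrt{z}$ as $z\to\infty$, obtained from the generating identity $\sum_k J_k(z)e^{ik\theta} = e^{iz\sin\theta}$ evaluated at $\theta = i$ (which gives $\sum_k J_k(z)^2 e^{-2k} = I_0(2z\sinh 1)\sim e^{2z\sinh 1}/\sqrt{z}$ by Parseval), combined with the symmetry $|J_{-k}|=|J_k|$ and the trivial bound $|J_k|\le 1$. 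Setting $z=nt$ and summing against $|\hat g(n)|^2 e^{2|n|} = \langle n\rangle^{-2N}$ yields the divergent tail $\sum_n e^{2nt\sinh 1}/\langle n\rangle^{2N+1/2}$ for every $t>0$, which via a sharp Fourier-based lower bound on $\|\cdot\|_{G_{1,1}}$ forces $\|u_3(\cdot,t)\|_{G_{1,1}}=\infty$ and hence $\|u(t)\|_{G_{1,1}}=\infty$.

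The main technical obstacle is obtaining a Gevrey-to-Fourier lower bound strong enough to use the above Bessel estimate. The naive Minkowski-$\ell^2$ bound $\|h\|_{G_{1,1}}^2 \gtrsim \sum_\lambda |\hat h(\lambda)|^2 \langle\lambda\rangle^{2r} e^{2|\lambda|_\infty}$ only captures an $\ell^\infty$-weighted Fourier decay and, applied to $\phi = g(x_1 - t\sin x_2)$, remains finite in the small-$t$ regime where the dominant Bessel mass concentrates at $|k| \sim nt < |n|$. Overcoming this requires either summing over all multi-indices $\beta$ and exploiting the large-$m$ asymptotics of $\sum_k k^{2m}J_k(z)^2$ when $m \gtrsim z$, or carefully tuning the polynomial factor in $\hat g(n)$ to the edge of the $G_{1,1}$ class. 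A purely geometric side-argument (every $G_{1,1}$ function analytically extends to the cross-polytope $\{|\operatorname{Im}x_1|+|\operatorname{Im}x_2|<1\}$, while the singularity variety $\{x_1-t\sin x_2=\pm i\}$ of the composition cuts into this polytope) covers the range $t>1/\sinh 1$, but the small-$t$ regime genuinely requires the Bessel-asymptotic computation.
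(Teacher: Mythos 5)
Your first half is sound and is essentially the paper's own construction: the profile used there is a periodization of a function whose Fourier transform behaves like $e^{-|\xi|}|\xi|^{-4}$ (built by integrating $h_1(x)=\sqrt{2/\pi}\,(1+x^2)^{-1}$ four times and cutting off with a Gaussian before applying Poisson summation), and the verification that such a profile lies in $G_{1,1}$ is exactly your Stirling computation --- the polynomial gain over the bare $e^{-|\xi|}$ is what makes the sum over $m$ converge. The problem is the second half, and you have diagnosed the obstruction correctly but not removed it. Both mechanisms you propose for \eqref{eq:t>0} fail precisely in the regime $0<t\le 1/\sinh 1$: the (correct) identity $\sum_k J_k(z)^2e^{-2k}=I_0(2z\sinh 1)$ only measures analyticity in $x_2$ alone, and the cross-polytope $\{|\Im x_1|+|\Im x_2|<1\}$ meets the singular set $\{x_1-t\sin x_2=\pm i\}$ only at its corner $(\pm i,0)$ when $t\sinh 1\le1$, where no contradiction arises because the restriction of $u_3(\cdot,t)$ to $x_2=0$ is just $g(x_1)\in G_{1,1}$. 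Since the theorem asserts \eqref{eq:t>0} for \emph{every} $t>0$ and you leave the small-$t$ case as an unexecuted ``technical obstacle'' with two unproved suggestions, the proposal is not a proof: the missing step is the heart of the theorem.

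For comparison, the paper does not use Fourier or Bessel analysis at this stage. It engineers the profile so that $g,g',g''$ extend continuously to the closed strip $|\Im z|\le1$ while $g'''$ has an explicit $\arctanh$ (logarithmic) singularity at $z=\pm i$; it then argues that $\|u(t)\|_{G_{1,1}}<\infty$ would force the joint two-variable Taylor series of $\psi=\partial_{x_1}^3u_3(\cdot,t)$ at the origin to converge at complex points with \emph{both} coordinates imaginary, and evaluates at $z_2=i\log 2$ (so that $t\sin z_2=3it/4$) while sending $\Im z_1\to-(1-3t/4)$, so the argument of $g'''$ reaches $-i$ for every $t>0$. The idea you are missing is exactly this joint complexification: the imaginary displacement $t|\Im\sin z_2|$ contributed by $x_2$ is \emph{added to} a displacement of nearly $1$ in $z_1$, rather than having to reach $1$ on its own. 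If you insist on the Fourier route, the fix is your own first suggestion --- exploit the mixed derivatives $\partial_1^{\beta_1}\partial_2^{\beta_2}$ so that the effective weight on $\hat g(n)J_k(nt)$ couples $|n|$ and $|k|$ rather than seeing only $\max(|n|,|k|)$ --- which requires the large-$m$ asymptotics of $\sum_{\beta_1+\beta_2=m}n^{2\beta_1}k^{2\beta_2}J_k(nt)^2$ that you have not carried out; you would also need to identify which derivative of your $g$ is actually singular at $\pm i$, since with $\hat g(n)=e^{-|n|}\langle n\rangle^{-N}$ the function $g$ itself is $C^{N-2}$ up to the boundary of the strip.
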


The example proving Theorem~\ref{thm:example} is
provided in Section~\ref{sec:G1delta}.

We are indebted to A.~Shnirelman~\cite{Shnirelman15} for the following example, pointing out that the results of Theorem~\ref{thm:isotropic} are sharp in the sense that the time of persistence $T$ of the Lagrangian analyticity radius may be strictly less than the maximal time of existence of a real-analytic solution.
\begin{remark}[\bf Time of persistence of the Lagrangian analyticity radius~\cite{Shnirelman15}]
Consider the stationary solution 
\[
u(x_1,x_2) = (\sin x_1 \cos x_2, - \cos x_1 \sin x_2)
\] 
of the Euler equations in $\RR^2$. This is an entire function of $(x_1,x_2)$, and moreover, the $x_1$-axis is invariant under the induced dynamics.  
Abusing notation we denote by 
\[
X_1(a_1,t) = X_1(a_1,0,t)
\] 
the image of the point $(a_1,0)$ under the flow map at the moment $t$, and by 
\[
Y_{22}(a_1,t) = Y_{22}(a_1,0,t) = (\partial_{a_1} X_1)(a_1,0,t)
\] 
its Lagrangian tangential derivative. These functions satisfy the ODE
\begin{align*}
&\frac{d}{dt} X_1(a_1,t) = \sin (X_1(a_1,t)), \qquad X_1(a_1,0) = a_1, \\
&\frac{d}{dt} Y_{22}(a_1,t) = \cos (X_1(a_1,t)) Y_{22}(a_1,t), \qquad Y_{22}(a_1,0) = 1.
\end{align*}
The solution $X_1$ is given by
\[
\cos X_1(a_1,t) = \frac{ (e^{2t}+1) \cos (a_1) - (e^{2t}-1)}{(e^{2t}+1) - (e^{2t}-1) \cos(a_1)}
\]
and its tangential gradient obeys
\[
Y_{22}(a_1,t) = \frac{2 e^t}{(e^{2t}+1) - (e^{2t} -1)\cos(a_1)}.
\]
Thus, for any fixed $t>0$, the function $Y_{22}(a_1,t)$ has a singularity at the complex point $a_1 = \Re a_1 + \Im a_1$ (and its conjugate) satisfying 
\[
\cos(a_1) = \cos (\Re a_1 + i \Im a_1) = \frac{e^{2t}+1}{e^{2t}-1} 
\]
so that
\[
\Re a_1 = 0, \qquad \mbox{and} \qquad |\Im a_1| =  \ln\left( \frac{e^{t}+1}{e^{t}-1}\right).
\]
Note however that this singularity obeys
\begin{align}
|a_1| \to \infty \qquad \mbox{as} \qquad t \to 0^+.
\end{align}
In summary, at any fixed $t>0$ the function $Y(a,t)$ is not anymore entire with respect to the label $a$.
Given any $\delta>0$, we have $\nabla_a v_0 = \nabla_x u_0  \in G_{1,\delta}$, and while $\nabla_x u(\cdot,t) = \nabla_x u_0 \in G_{1,\delta}$ for all $t>0$, there exists 
\[
T = T(\delta) = \ln \left( \frac{e^{\delta}+1}{e^{\delta}-1} \right)>0
\] 
such that $Y(\cdot,t)$, and thus also $\nabla_a v(\cdot,t)$, obey
\[
\|Y(\cdot,t)\|_{G_{1,\delta}},\|\nabla v(\cdot,t)\|_{G_{1,\delta}} \to \infty \qquad \mbox{as} \qquad t \to T(\delta)^-.
\]
Thus, the time of analyticity radius persistence $T$ guaranteed by Theorem~\ref{thm:isotropic} cannot be taken as infinite. Yet, Theorem~\ref{thm:isotropic}  is consistent with $T(\delta) \to 0$ as $\delta \to \infty$.
\end{remark}

The  proof of Theorem~\ref{thm:isotropic} may be used to obtain the local existence and the persistence of the radius for anisotropic Gevrey spaces as well.
\begin{theorem}[\bf Solvability in  Lagrangian anisotropic Gevrey spaces] \label{thm:anisotropic}
For a fixed direction $j\in\{1,\ldots,d\}$, assume that $v_0\in H^{r+1}$ and that
\begin{align*}
\nabla v_0  \in G_{s,\delta}^{(j)}
\end{align*}
for some index $s\geq1$ and radius $\delta>0$.
Then there exists $T>0$ and a unique solution 
$(v,Y)\in C([0,T],H^{r+1})\times C([0,T],H^{r})$ 
of the Lagrangian Euler system \eqref{eq:EL:1}--\eqref{eq:EL:3}, which moreover satisfies
\begin{align*}
\nabla v, \aaa \in L^{\infty}([0,T], G_{s,\delta}^{(j)}).
\end{align*}
\end{theorem}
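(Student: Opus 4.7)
My strategy is to mirror the proof of Theorem~\ref{thm:isotropic} with every isotropic multi-index $\partial^\beta$ replaced by the single-direction iterate $\partial_j^m$. Local existence and uniqueness in $H^{r+1}$ is the standard Lagrangian Euler well-posedness statement, so the task reduces to propagating the anisotropic Gevrey control of $\nabla v$ and $Y$. Setting
$$
M(t) := \|\nabla v(t)\|_{G_{s,\delta}^{(j)}} + \|Y(t)-I\|_{G_{s,\delta}^{(j)}},
$$
I would apply $\partial_j^m$ to \eqref{eq:EL:1} and \eqref{eq:EL:3}, perform an $H^r$ energy-type estimate on each differentiated equation, multiply by $\delta^m/m!^s$, and sum over $m\geq 0$.

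The key algebraic input is that $G_{s,\delta}^{(j)}$ is a Banach algebra whenever $s\geq 1$. Indeed, $H^r$ is an algebra for $r>d/2$, and the elementary inequality
$$
\frac{1}{m!^s}\binom{m}{k} = \frac{m!^{1-s}}{k!\,(m-k)!} \leq \frac{1}{k!^s\,(m-k)!^s}\qquad (s\geq 1,\ 0\leq k\leq m),
$$
applied to the Leibniz expansion of $\partial_j^m(fg)$, yields $\|fg\|_{G_{s,\delta}^{(j)}} \leq C\|f\|_{G_{s,\delta}^{(j)}}\|g\|_{G_{s,\delta}^{(j)}}$. Consequently, all multilinear Lagrangian nonlinearities---in particular the $Y(\nabla v)Y$ appearing in \eqref{eq:EL:3}---are dominated by polynomial expressions in $M$.

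For the pressure I would use the Lagrangian Poisson equation
$$
Y_i^k\,\partial_k\bigl(Y_i^l\,\partial_l q\bigr) = B(Y,\nabla v),
$$
obtained by applying the Lagrangian divergence $Y_i^k\partial_k$ to \eqref{eq:EL:1} and invoking \eqref{eq:EL:2}; here $B$ is bilinear in $\nabla v$ with $Y$-dependent coefficients. On a short interval $[0,T]$ we have $\|Y-I\|_{H^r}\ll 1$, so the left-hand side is a small perturbation of $\Delta q$, and Neumann-series inversion provides the baseline $H^{r+1}$ bound on $\partial_j^m q$. The commutator $[\partial_j^m,\,Y_i^k\partial_k(Y_i^l\partial_l\,\cdot)]$ is then expanded by Leibniz and every resulting term contains only $\partial_j$-derivatives of $Y$, which is precisely what the anisotropic Gevrey norm controls.

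The main obstacle is exactly this pressure step: the coefficients $Y$ are Gevrey only in the direction $j$ while the elliptic operator mixes all directions. What rescues the argument is that commuting $\partial_j^m$ across the operator produces only $\partial_j$-derivatives on the coefficients; no Gevrey regularity in the other variables is ever invoked. Collecting the estimates yields a differential inequality of the form
$$
\frac{d}{dt} M(t) \leq C\bigl(1+M(t)\bigr)^3,
$$
and Gronwall produces the required $L^\infty([0,T];G_{s,\delta}^{(j)})$ bound on a short interval. Uniqueness follows from a parallel but simpler $H^{r+1}$ contraction estimate.
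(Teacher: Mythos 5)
Your overall architecture (apply $\partial_j^m$, use that Leibniz only ever produces $\partial_j$-derivatives on the coefficients, exploit $\binom{m}{k}m!^{-s}\le k!^{-s}(m-k)!^{-s}$ for $s\ge1$, and close a polynomial inequality for the summed norm) is exactly the right skeleton, and it is the one the paper uses. The problem is the engine you put inside it: estimating $\nabla v$ from the momentum equation \eqref{eq:EL:1} together with the Lagrangian pressure Poisson equation. This loses a derivative and the claimed inequality $\frac{d}{dt}M\le C(1+M)^3$ does not close at the stated regularity. Concretely, your norm $M$ controls $\nabla v$ and $Y-I$ in ($\partial_j^m$-weighted) $H^r$ only. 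Differentiating $\partial_t v^i=-Y_i^k\partial_k q$ once in space produces $\nabla Y\otimes\nabla q+Y\,\nabla^2 q$; the factor $\nabla Y$ lives only in $H^{r-1}$ (since $Y-I\sim\int Y\nabla v\,Y$ is only as good as $\nabla v\in H^r$), and promoting it to $H^r$ would require $v\in H^{r+2}$, which is circular. The same loss reappears inside your elliptic step: the operator $Y_i^k\partial_k(Y_i^l\partial_l\cdot)$ expands as $\Delta+(YY^{\top}-I):\nabla^2+Y\nabla Y\cdot\nabla$, and the first-order coefficient $Y\nabla Y$ again costs a derivative of $Y$ you do not control, so the Neumann-series inversion does not give $q\in H^{r+2}$ from data in $H^r$. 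Unless you identify a specific cancellation (the analogue of the antisymmetry of $u\cdot\nabla$ in the Eulerian energy estimate), this scheme fails already at the level of plain $H^{r+1}$ local existence, before any Gevrey weights enter.

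The paper closes the estimate by never touching the pressure or the evolution equation for $v$. Instead it uses the conserved Lagrangian vorticity (the Cauchy invariants): in $2d$ the div--curl system \eqref{eq:vort:2D:1}--\eqref{eq:vort:2D:2}, and in $3d$ the system \eqref{eq:vort:3D:1}, \eqref{eq:vort:3D:2}, whose right-hand side is the \emph{time-independent} datum $\omega_0$. Via the Helmholtz decomposition this yields an algebraic, perturbative bound of the form $\|\partial_j^m\nabla v\|_{H^r}\le C\|\partial_j^m\omega_0\|_{H^r}+C\sum\binom{m}{k}\|\partial_j^k(Y-I)\|_{H^r}\|\partial_j^{m-k}\nabla v\|_{H^r}$, with no derivative loss and no time derivative on $v$ at all; the only evolution used is the integrated ODE \eqref{eq:a:integrated} for $Y-I$, which gains a factor $T^{1/2}$ and lets the whole recursion for $B_m=V_m+Z_m$ close with the radius $\delta$ exactly preserved. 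If you want to salvage your route, replace the momentum/pressure step by this vorticity step; your anisotropic commutator observation and the algebra property of $G^{(j)}_{s,\delta}$ then carry over verbatim. You should also add (as the paper does in its Remark on justification) a truncation/mollification argument showing the formal recursion applies to actual solutions, but that is a routine supplement compared with the derivative-loss issue above.
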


The above theorem does not hold in the Eulerian coordinates as shown by the next result. The fact that the Eulerian version of the theorem does not hold might not surprise, due to the isotropy and time-reversibility of the Euler equations. On the other hand, the fact that the Lagrangian formulation keeps the memory of initial anisotropy is puzzling.

\begin{theorem}[\bf Ill-posedness in  Eulerian anisotropic real-analytic spaces] \label{thm:counterexample}
There exists $T>0$ and an initial datum $u_0 \in C^\infty(\RR^2)$ for which $u_0$ and $\omega_0$ are  real-analytic in $x_1$, uniformly with respect to $x_2$, such that the unique $C([0,T];H^r)$ solution $\omega(t)$ of the Cauchy problem for the Euler equations \eqref{eq:E:1}--\eqref{eq:E:3} is not real-analytic in $x_1$, for  any $t \in (0,T]$.
\end{theorem}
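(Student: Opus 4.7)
The plan is to exploit the two-dimensional vorticity conservation identity $\omega(x,t) = \omega_0(A(x,t))$, with $A(\cdot,t) := X(\cdot,t)^{-1}$, together with Theorem~\ref{thm:anisotropic}. I will construct initial data of the form ``non-analytic shear profile in $x_2$ plus a small $x_1$-analytic perturbation,'' designed so that the non-shear correction forces the inverse flow component $A_2(x,t)$ to depend non-trivially on $x_1$ for every $t>0$. Composing the non-analytic $x_2$-profile of $\omega_0$ with this non-trivial $x_1$-dependence will destroy the Eulerian $x_1$-analyticity of $\omega$, even though the Lagrangian counterpart retains $a_1$-analyticity by Theorem~\ref{thm:anisotropic}.

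Concretely, I would fix a smooth $g:\RR\to\RR$ that fails to be real-analytic at $x_2=0$ (for instance $g(y)=e^{-1/y^2}$ for $y>0$, extended by $0$ for $y\leq 0$ and multiplied by a smooth cutoff), set $U(x_2)=-\int_0^{x_2}g$, and define for a small $\epsilon>0$
\[
\omega_0(x_1,x_2) = g(x_2) + \epsilon\cos(x_1), \qquad u_0(x_1,x_2) = (U(x_2),\, \epsilon\sin(x_1)).
\]
Both $u_0$ and $\omega_0$ are smooth and divergence-free, and are trigonometric polynomials in $x_1$ with smooth $x_2$-coefficients, so they are entire analytic in $x_1$ uniformly in $x_2$; in particular $\nabla u_0\in G^{(1)}_{1,\delta}$ for every $\delta>0$. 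Theorem~\ref{thm:anisotropic} then provides $T>0$ and a unique solution on $[0,T]$ with $\nabla v \in L^\infty([0,T];G^{(1)}_{1,\delta})$. Integrating $\partial_t X = v$ in time yields $X(a,t)$ real-analytic in $a_1$ uniformly in $a_2$, and a Gevrey inverse-function argument makes $A(x,t):=X^{-1}(x,t)$ real-analytic in $x_1$ uniformly in $x_2$ throughout $[0,T]$.

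By vorticity conservation, $\omega(x,t) = g(A_2(x,t)) + \epsilon\cos(A_1(x,t))$. A short-time Taylor expansion of the flow gives $A_2(x_1,x_2,t) = x_2 - \epsilon t\sin(x_1) + O(t^2)$, so the implicit function theorem produces $x_2^*(t)=O(t^2)$ with $A_2(0,x_2^*(t),t)=0$, while $\partial_{x_1}A_2(0,x_2^*(t),t) = -\epsilon t + O(t^2)\neq 0$ for small $t>0$. Setting $\phi_t(x_1):=A_2(x_1,x_2^*(t),t)$, the map $\phi_t$ is a real-analytic local diffeomorphism near $x_1=0$. If $g\circ\phi_t$ were real-analytic at $0$, then $g = (g\circ\phi_t)\circ\phi_t^{-1}$ would be real-analytic at $0$, contradicting our choice of $g$. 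Since $x_1\mapsto \epsilon\cos(A_1(x_1,x_2^*(t),t))$ is analytic (an entire function composed with an $x_1$-analytic function), this forces $\omega(\cdot,x_2^*(t),t)$ to fail $x_1$-analyticity at $x_1=0$, and hence $\omega(t)$ is not real-analytic in $x_1$, for every $t\in(0,T]$.

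The hard part will be the rigorous justification that $A$ retains $x_1$-analyticity uniformly in $x_2$ on a nontrivial time interval: Theorem~\ref{thm:anisotropic} controls $X$ directly in $G^{(1)}_{1,\delta}$, but inverting in this anisotropic Gevrey class incurs a loss of radius. I expect to handle this via a contraction mapping on $G^{(1)}_{1,\delta'}$ for some $\delta'<\delta$ close to $\delta$, mirroring the fixed-point scheme behind Theorem~\ref{thm:anisotropic}, and to verify that the loss $\delta-\delta'$ can be made small uniformly as $t\downarrow 0$ so that $A$ extends analytically in a fixed $x_1$-strip throughout $[0,T]$. Once this is in place, the composition/non-analyticity argument above closes the proof.
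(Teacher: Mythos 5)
There is a genuine gap at the step you yourself flag as ``the hard part'': the claim that $A(\cdot,t)=X(\cdot,t)^{-1}$ is real-analytic in $x_1$ uniformly in $x_2$. Theorem~\ref{thm:anisotropic} gives analyticity of $\nabla v$ and $\aaa=(\nabla_a X)^{-1}$ in the \emph{label} $a_1$ only, with mere $H^r$ regularity in $a_2$, and this does not survive inversion of the flow map. To see why, fix $t$ and $x_2$ and note that the curve $\gamma(x_1)=A(x_1,x_2,t)$ in label space solves the ODE $\gamma'(x_1)=\aaa(\gamma(x_1),t)e_1=(\partial_{a_2}X_2,-\partial_{a_1}X_2)(\gamma(x_1),t)$. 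Analytic dependence of $\gamma$ on $x_1$ would require the vector field $a\mapsto \aaa(a,t)e_1$ to be analytic in \emph{both} components of $a$ along the curve; here it is only Sobolev in $a_2$, and the curve genuinely moves in the $a_2$ direction (its $a_2$-velocity is $-\partial_{a_1}X_2\approx -\epsilon t\cos a_1\neq0$). The standard one-variable counterexample $\gamma'=(f(\gamma_2),1)$ with $f$ smooth non-analytic shows that such a solution is in general not analytic in the parameter. Worse, the situation is self-defeating: the non-degeneracy $\partial_{x_1}A_2\neq0$ that you need in order to transfer the non-analyticity of $g$ through the composition $g\circ\phi_t$ is exactly the mechanism that drags $\gamma$ across varying $a_2$ and destroys the $x_1$-analyticity of $A$ (and of $A_1$, which you also need for the ``$\epsilon\cos(A_1)$ is analytic'' step). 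A contraction argument in $G^{(1)}_{1,\delta'}$ cannot repair this, because the obstruction is structural (mixing of the analytic and non-analytic directions under composition), not a loss of radius. There is also a secondary issue that $u_0=(U(x_2),\epsilon\sin x_1)$ and $\omega_0=g(x_2)+\epsilon\cos x_1$ do not decay, so they are not in $H^r(\RR^2)$ as the theorem requires; a cutoff would be needed and would have to be threaded through the whole argument.

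For contrast, the paper avoids any analyticity statement about the flow map or its inverse. It takes $\omega_0$ compactly supported in the strip $\{|x_2|\le 1\}$, concentrated near the origin so that the induced velocity is approximately a rotation, and observes that for small $t>0$ the transported support is uncovered near $(-1,1)$ and still covers a neighborhood of $(1,1)$. Hence on the horizontal line $x_2=1$ the vorticity vanishes on an open interval without vanishing identically, which is incompatible with real-analyticity in $x_1$; only continuity of the flow and the identity theorem for one-variable analytic functions are used. If you want to salvage your construction, the natural fix is to make $g$ one-sided and compactly supported and to argue via the tilting of the boundary of $\supp\omega$ (a support/uncovering argument) rather than via composition of $g$ with an allegedly analytic $\phi_t$ --- at which point you have essentially rederived the paper's proof.
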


\section{Ill-posedness in Eulerian anisotropic real-analytic spaces} \label{sec:counterexample}
In this section we prove Theorem~\ref{thm:counterexample}. 
Here, all the derivatives are taken with respect to the Eulerian variables. The idea of the proof is as follows. 
We consider an initial vorticity that is supported in a horizontal strip around the $x_1$ axis and which is nonzero in a horizontal strip and is very highly concentrated near the origin. 
We can construct it such that it is real analytic in $x_1$, but is obviously not real analytic in $x_2$. 
Given that the vorticity is approximately a point vortex at the origin, the corresponding velocity is approximately a pure rotation. 
Then for short time, the Euler equations will evolve in such manner that the vorticity is supported in a slightly deformed but rotated strip. 
The rotation uncovers some of the points that were on the boundary of the original strip, making them points of vanishing vorticity, while covering others. 
Thus, on a horizontal line parallel with the $x_1$ axis, the vorticity instantly acquires an interval on which it must vanish, while it is not identically zero, and hence it cannot possibly continue to be real analytic with respect to $x_1$.

In the detailed proof we first construct a function
  \begin{align*}
   u(x_1,x_2)
   \in C^{\infty}({\mathbb R}^2)
  \end{align*}
such that the following properties hold:
\begin{itemize}
\item[(i)] $\ddiv u=0$ on ${\mathbb R}^2$, $\curl u = \omega$,
\item[(ii)] 
$\supp\, \omega
\subseteq \{(y_1,y_2):-1\leq y_2\leq 1\}$
\item[(iii)] $u_2(1,1)>0$ and $u_2(-1,1)<0$
\item[(iv)] There exists $\eps\in(0,1/2)$ such that
  \begin{align*}
   \omega(x_1,x_2)\neq 0   
   \comma 
    (x_1,x_2)
    \in
    \bigl\{
     (y_1,y_2)
     :
     |y_1-1|<\eps,
     1-\eps<y_2<1
    \bigr\}
  \end{align*}
\item[(v)] (tangential analyticity for $u$) There exists
constants $M_0, \delta_0>0$ such that
  \begin{align}
   |\partial_{1}^{m}u(x_1,x_2)|
   \le
   \frac{M_0 m!}{\delta_0^{m}}
   \label{EQ03}
  \end{align}
with
  \begin{align}
   |\partial_{1}^{m}\partial_{1}u(x_1,x_2)|
   \le
   \frac{M_0 m!}{\delta_0^{m}}
   \label{EQ04}
  \end{align}
and
  \begin{align}
   |\partial_{1}^{m}\partial_{2}u(x_1,x_2)|
   \le
   \frac{M_0 m!}{\delta_0^{m}}
   \label{EQ05}
  \end{align}
\item[(vi)] $\partial^{\alpha}\omega$ converges to $0$
exponentially fast and uniformly 
as $x_1\to\pm\infty$,
uniformly in $x_2$.
\end{itemize}

In order to simplify the presentation, we introduce the following
notation: If $\omega$ is a function (or a measure) with a sufficient
decay at infinity, denote
  \begin{align*}
   u(\omega)
   =
   \int_{{\mathbb R}^2}
           K(x-y)\omega(y)\,dy
  \end{align*}
where
  \begin{align*}
   K(x)
   =\frac{1}{2\pi}
     \left(
      -\frac{x_2}{|x|^2},
      \frac{x_1}{|x|^2}
     \right)
  \end{align*}
denotes the Biot-Savart kernel.
Now, choose a test function
  \begin{align*}
   \psi\in C_{0}^{\infty}({\mathbb R})
  \end{align*}
with values in $[0,1]$ such that
$\int\psi=1$ with $\psi(x)>0$ for $x\in(-1,1)$ and $\psi=1$ on $[-1/4,1/4]$.
Consider the sequence of vorticities
  \begin{align}
   \omega^{(k)}(x_1,x_2)
   =c_0 k^2 \exp\bigl(
          -k^2(x_1^2+x_2^2)
         \bigr)
     \psi(x_2)
   \label{EQ09}
  \end{align}
for $k=1,2,\ldots$, where $c_0$ is a normalizing constant
such that
  \begin{align*}
   \int \omega^{(k)}(x) \,dx\to 1
   \hbox{~~as~} k\to\infty
  \end{align*}
Denote by
  \begin{align*}
   u^{(k)}(x_1,x_2)
   = u(\omega^{(k)}(x_1,x_2))  
   \comma 
   k=1,2,\ldots
  \end{align*}
the corresponding velocities.
Each individual member of this sequence of velocities satisfies the
assumptions
(i), (ii), (iv), (v), and (vi).
(Note however that the constants in \eqref{EQ03}--\eqref{EQ05} depend
on $k$.)
The construction of a desired vorticity is
complete once we show that for $k$
large enough, we have
  \begin{align*}
   u^{(k)}_2(1,1)>0
  \end{align*}
and
  \begin{align*}
   u^{(k)}_2(-1,1)<0.
  \end{align*}
These inequalities for $k$ sufficiently large
indeed follow immediately once we observe that the
sequence \eqref{EQ09} is an approximation of identity, i.e.,
it converges to the Dirac mass $\delta_0$, while the 
velocity
  \begin{align*}
   u=u(\delta_0)
   =\frac{1}{2\pi}
     \left(
      -\frac{x_2}{|x|^2},
      \frac{x_1}{|x|^2}
     \right)
   \end{align*}
corresponding to $\delta_0$ satisfies (iii).
Thus the construction of a velocity satisfying
the properties (i)--(vi) is complete. Denote this velocity
by $u_0$ and the corresponding vorticity
$\omega_0=\curl u_0$.
Now, consider the Euler equation
  \begin{align*}
   \omega_t
   + u(\omega)\cdot\nabla \omega =0
  \end{align*}
with
  \begin{align*}
   \omega(0)=\omega_0
  \end{align*}
where, recall, $u(\omega)$ denotes the velocity computed
from the vorticity $\omega$ via the Biot-Savart law.
By the well-known properties of the Euler equation, the
solution is smooth for all $t>0$.
By (ii) and (iii) and using the Lagrangian variables to
solve the Euler equation, there exists
$t_0>0$ with the following property:
For every $t\in(0,t_0)$, there exists a constant
$\eps_1(t)>0$ such that
  \begin{align}
   \omega(x_1,x_2)=0
   \comma |(x_1,x_2)-(-1,1)|<\eps_1(t)
   \label{EQ17}
  \end{align}
On the other hand, by
(iii) and (iv), we obtain, by possibly reducing $t_0$, that
for every $t\in(0,t_0)$ there exists a constant $\eps_2(t)>0$
such that
  \begin{align}
   \omega(x_1,x_2)
   \neq 0
   \comma |(x_1,x_2)-(1,1)|<\eps_2(t)
   .
   \label{EQ18}
  \end{align}
The properties \eqref{EQ17} and \eqref{EQ18}
contradict the tangential analyticity of $\omega(t)$
at $x_2=1$ for all $t\in(0,t_0)$.

\section{Local solvability in Lagrangian anisotropic Gevrey spaces}
\label{sec:anisotropic}

In this section we prove Theorem~\ref{thm:anisotropic}. For simplicity of the presentation, we give here the proof for $d=2$. 
The proof carries over mutatis mutandis to $d=3$, where the only change arises from using \eqref{eq:vort:3D:1} instead of \eqref{eq:vort:2D:1}. 
These details may be seen in Section~\ref{sec:isotropic}, where the well-posedness 
(by which we mean the existence and uniqueness)
in $3d$ isotropic Gevrey spaces is proven. 

Fix $s \geq1$. Without loss of generality, the direction $j \in \{1,2\}$ may be taken to be $j=1$. 
Fix $\delta >0$ so that $\nabla v_0 \in G_{s,\delta}^{(1)}$ with the norm $M$, that is, the quantity
\begin{align*}
\Omega_m= \| \partial_1^m \nabla v_0\|_{H^r} 
\end{align*}
obeys
\begin{align}
\sum_{m\geq 0} \Omega_m \frac{\delta^m}{m!^s} \leq M
\label{eq:aniso:data}
\end{align}
Recall that $\aaa_0 = I$.

Fix $T>0$, to be chosen further below sufficiently small in terms of $M$, $s$,
and $\delta$. For $m\geq 0$ we define 
\begin{align}
V_m &= V_m(T) = \sup_{t\in [0,T]} \| \partial_1^m \nabla v(t)\|_{H^r} \label{eq:Vm:def}, \\
Z_m &= Z_m(T) = \sup_{t\in [0,T]} t^{-1/2} \| \partial_1^m (\aaa(t)-I) \|_{H^r} \label{eq:Zm:def}.
\end{align}
Observe that in the norm \eqref{eq:Vm:def} the velocity $v$ does not appear without a gradient. Also, we note that the power $-1/2$ of $t$ appearing in \eqref{eq:Zm:def} is arbitrary, in the sense that the proof works with any power in $(-1,0)$.

First we bound $\nabla v$ from the approximate curl-div system \eqref{eq:vort:2D:1}--\eqref{eq:vort:2D:2}, in terms of $\aaa$ and $\omega_0$.
Since $\partial_1^m$ commutes with $\curl$ and $\ddiv$, we may use the Helmholtz decomposition to estimate
\begin{align*}
\Vert  \partial_{1}^{m}\nabla v\Vert_{H^{r}}
\leq C\Vert \partial_1^{m} \curl  v\Vert_{H^{r}} + C \Vert \partial_1^{m}  \ddiv v\Vert_{H^{r}}.
\end{align*}
Further, by appealing to \eqref{eq:vort:2D:1}--\eqref{eq:vort:2D:2}, the Leibniz rule, and the fact that $H^r$ is an algebra, we obtain
\begin{align*}
\Vert  \partial_{1}^{m}\nabla v\Vert_{H^{r}}
&\leq C\Vert \partial_1^{m} (\omega_0 + \eps_{ij} (\delta_{ik} - \aaa_i^k) \partial_k v^j) \Vert_{H^{r}} 
+ C \Vert \partial_1^{m}  ((\delta_{ik} -\aaa_i^k) \partial_k v^i)\Vert_{H^{r}} \notag\\
&\leq C \Vert \partial_{1}^{m}\omega_0\Vert_{H^{r}} 
+C \Vert \aaa-I\Vert_{H^{r}} \Vert \partial_{1}^{m}\nabla v\Vert_{H^{r}}
+ C \Vert \partial_{1}^{m}(\aaa-I)\Vert_{H^{r}} \|\nabla v\|_{H^r} \notag\\
&\qquad + C \sum_{j=1}^{m-1} {m \choose j} \| \partial_1^j (\aaa-I) \|_{H^r} \|\partial_1^{m-j} \nabla v\|_{H^r}.
\end{align*}
Taking a supremum over $t \in [0,T]$ and using the notation \eqref{eq:Vm:def}--\eqref{eq:Zm:def}, we obtain 
\begin{align}
V_m \leq C \Omega_m + C T^{1/2}  Z_0 V_m  + C  T^{1/2} Z_m V_0 + C  T^{1/2}  \sum_{j=1}^{m-1} {m \choose j} Z_j V_{m-j}
\label{eq:Vm}
\end{align}
for all $m\in{\mathbb N}$, while for $m=0$ we have
\begin{align}
V_0 \leq C \Omega_0 + C T^{1/2} Z_0 V_0.
\label{eq:V0}
\end{align}
Note that we have not used here the evolution equation \eqref{eq:EL:1}
for $v$, and have instead appealed to the Lagrangian vorticity conservation~\eqref{eq:vort:2D:1}.

In order to estimate $Z_m$, we use the Lagrangian evolution \eqref{eq:EL:3} in integrated form, and obtain
\begin{align}
I - \aaa(t)  &= \int_0^t \aaa \colon \nabla v \colon \aaa \; d\tau\notag\\
&= \int_0^t (\aaa-I) \colon \nabla v \colon (\aaa-I) \; d\tau + \int_0^t (\aaa-I) \colon \nabla v \; d\tau  \notag\\
&\quad + \int_0^t  \nabla v \colon (\aaa-I)  \; d\tau + \int_0^t   \nabla v   \; d\tau
\label{eq:a:integrated}
\end{align}
for all $t \in [0,T]$. Dividing by $t^{1/2}$ and taking a supremum over $t \in [0,T]$ it immediately follows from \eqref{eq:a:integrated} that
\begin{align}
Z_0 \leq C T^{1/2} (1+T^{1/2} Z_0)^2 V_0.
\label{eq:Z0}
\end{align}
Differentiating \eqref{eq:a:integrated} $m$ times with respect to the label $a_1$, using the Leibniz rule, and the fact that $H^r$ is an algebra, we arrive at
\begin{align*}
&\| \partial_1^m(\aaa(t) -I) \|_{H^r}\notag\\
&\qquad\leq \sum_{|(j,k)|\leq m} \int_{0}^{t} {m\choose j~k} \| \partial_1^j (\aaa-I)\|_{H^r} \|\partial_1^k (\aaa-I) \|_{H^r} \|\partial_1^{m-j-k} \nabla v\|_{H^r} \; d\tau \notag\\
&\qquad\qquad + 2 \sum_{j=0}^m \int_0^t {m \choose j} \|\partial_1^j (\aaa-I) \|_{H^r} \|\partial_{1}^{m-j} \nabla v\|_{H^r} \; d\tau  + \int_0^t \|\partial_1^m \nabla v\|_{H^r} \; d\tau
\end{align*}
for all $m\geq 1$. 
Further, dividing by $t^{1/2}$, taking a supremum over $t \in [0,T]$ and using the notation \eqref{eq:Vm:def}--\eqref{eq:Zm:def}, we obtain 
\begin{align}
Z_m 
&\leq C T^{3/2} \sum_{|(j,k)|\leq m} {m\choose j~k} Z_j Z_k V_{m-j-k} + C T  \sum_{j=0}^m  {m \choose j} Z_j V_{m-j} + C T^{1/2} V_m \notag\\
&\leq C T^{1/2} (T Z_0^2 V_m + T Z_m Z_0 V_0 + T^{1/2} Z_0 V_m + T^{1/2} Z_m V_0 + V_m)  \notag\\
&\qquad + C T^{3/2} \sum_{0 < |(j,k)| < m} {m\choose j~k} Z_j Z_k V_{m-j-k} + C T  \sum_{j=1}^{m-1}  {m \choose j} Z_j V_{m-j}
\label{eq:Zm}
\end{align}
for some constant $C>0$. 

From \eqref{eq:V0} and \eqref{eq:Z0} we obtain that for any $t \in (0,T]$ we have
\begin{align*}
V_0(t) &\leq C_0 \Omega_0 + C_0 t^{1/2} Z_0(t) V_0(t)\\
Z_0(t) &\leq C_0 t^{1/2} \sup_{\tau \in [0,t)} \left( V_0(\tau) (1+t^{1/2} Z_0(\tau))^2 \right)
\end{align*}
for some constant $C_0>0$, 
while the initial data obey
\begin{align*}
V_0(0) &= \|\nabla v_0\|_{H^r} = \Omega_0 \leq M\\
Z_0(0) &= 0.
\end{align*}
Here we used that in view of \eqref{eq:a:integrated}, as long as $\nabla v$ and $\aaa$ are bounded in time, we have  $t^{-1/2}(\aaa(t)-I) \approx t^{1/2} \to 0$  as $t\to 0$.
By the {continuity in time} of $V_0(t)$ and $Z_0(t)$, it follows that there exists 
\begin{align*}
T_1 = T_1(M) > 0
\end{align*}
such that 
\begin{align}
\sup_{t\in[0,T_1]} V_0(t) &\leq  3 C_0 M \label{eq:local:bnd:1} \\
\sup_{t\in[0,T_1]}  Z_0(t) &\leq \frac 12 .\label{eq:local:bnd:2}
\end{align}
This is a time of local existence in $H^r(\RR^d)$ for $\nabla v$ and $a$.

At this stage, we assume that $T$ obeys
\begin{align}
T\leq T_1
\label{eq:T1}
\end{align}
and  define
\begin{align*}
B_m = V_m + Z_m = \sup_{t\in [0,T]} \left( V_m(t) + Z_m(t) \right)
\end{align*}
for all $m\geq 0$. By \eqref{eq:local:bnd:1}--\eqref{eq:local:bnd:2} we have 
\begin{align}
B_0 \leq 3 C_0 M + \frac 12.
\label{eq:B0}
\end{align}
Adding \eqref{eq:Vm} and \eqref{eq:Zm} we arrive at
\begin{align}
B_m &\leq C_1 \Omega_m + C_1 T^{1/2}  (1+ B_0 + T^{1/2} B_0 + T B_0^2) B_m \notag\\
&\qquad + C_1 T^{1/2} (1+T^{1/2}) \sum_{0 < j<m} {m \choose j} B_j B_{m-j} \notag\\
&\qquad + C_1 T^{3/2}  \sum_{0< |(j,k)| < m} {m\choose j~k} B_j B_k B_{m-j-k}
\label{eq:Bm:1}
\end{align} 
for all $m\geq 1$, for some positive constant $C_1\geq 1$.
In view of \eqref{eq:B0} we may take 
\[
0 < T = T(B_0) = T(M)< T_1 
\]
sufficiently small, such that 
\begin{align}
C_1 T^{1/2}  (1+ B_0 + T^{1/2} B_0 + T B_0^2)  \leq \frac 12.
\label{eq:T2}
\end{align}
We thus obtain from \eqref{eq:Bm:1} and \eqref{eq:T2} that
\begin{align}
B_m \leq 2 C_1 \Omega_m 
&+ 2 C_1 T^{1/2} (1+T^{1/2})   \sum_{0 < j < m} {m \choose j} B_j B_{m-j} \notag\\
&+  2 C_1 T^{3/2}  \sum_{0< |(j,k)| < m} {m\choose j~k} B_j B_k B_{m-j-k}
\label{eq:Bm:2}
\end{align}
for all $m \geq 1$.

Finally, denote
\begin{align*}
\|(\nabla v, \aaa-I)\|_{\delta,s,T} = \sum_{m\geq 0}  \frac{B_m \delta^m}{m!^s}.
\end{align*}
Multiplying \eqref{eq:Bm:2} by $\delta^m m!^{-s}$, noting that since $s\geq 1$ we have ${m \choose j}^{1-s} \leq 1$ and ${m \choose j~k}^{1-s} \leq 1$, and recalling the initial datum assumption \eqref{eq:aniso:data}, we arrive at
\begin{align}
\|(\nabla v,\aaa-I)\|_{\delta,s,T} 
&\leq 2 C_1 M + 2 C_1 T^{1/2} (1+T^{1/2})   \sum_{m \geq 0} \sum_{0 < j < m}  \frac{B_j \delta^j}{j!^s}  \frac{B_{m-j} \delta^{m-j}}{(m-j)!^s} \notag\\
&\qquad +  2 C_1 T^{3/2}   \sum_{m \geq 0}  \sum_{0< |(j,k)| < m} \frac{B_j \delta^j}{j!^s} \frac{B_k \delta^k}{k!^s}  \frac{B_{m-j-k} \delta^{m-j-k}}{(m-j-k)!^s} \notag\\
&\leq 2 C_1 M + 2 C_1 T^{1/2} (1+T^{1/2}) \|(\nabla v,\aaa-I)\|_{\delta,s,T}^2 \notag\\
&\qquad \qquad + 2 C_1 T^{3/2} \|(\nabla v,\aaa-I)\|_{\delta,s,T}^3.
\label{eq:Bm:3}
\end{align}
Here we used the discrete Young inequality $\ell^1 \ast \ell^1 \subset \ell^1$.
In order to conclude the proof, we note that the initial values are $\nabla v_0$ obeying \eqref{eq:aniso:data}, and $\aaa_0 =I$.
Thus, at $T=0$ we have
\begin{align}
\|(\nabla v,\aaa-I)\|_{\delta,s,0}  \leq M,
   \label{EQ20}
\end{align}
and in view of \eqref{eq:Bm:3}, if $T$ is taken sufficiently small so that 
\begin{align}
8 C_1^2 T^{1/2} (1+T^{1/2}) M + 32 C_1^3 T^{3/2}  M^2\leq \frac 14,
   \label{EQ27}
\end{align}
we arrive at
\begin{align}
\|(\nabla v,\aaa-I)\|_{\delta,s,T}  \leq 4 C_1 M.
\label{eq:T3}
\end{align}

In summary, we have proven that there exists $T = T(M) > 0$, given by \eqref{eq:T1}, \eqref{eq:T2}, and \eqref{EQ27}, such that 
\begin{align}
&\sum_{m\geq 0} \sup_{t \in [0,T]} \left( \| \partial_1^m \nabla v(t)\|_{H^r}  + \frac{\| \partial_1^m (\aaa(t)-I)\|_{H^r} }{t^{1/2}} \right) \frac{\delta^m}{m!^s} \notag\\
&\qquad \leq C \sum_{m\geq 0} \|\partial_1^m \nabla v_0\|_{H^r} \frac{\delta^m}{m!^s} = C M
   \label{EQ19}
\end{align}
for some constant $C>0$. This concludes the proof of the a~priori estimates needed to establish Theorem~\ref{thm:anisotropic}.

\begin{remark}[\bf Justification of the a priori estimates]\label{R01}
Here we show that by using an approximation argument we may  rigorously justify the inequality \eqref{EQ19}.
Assume that the initial datum
$v_0$ is real-analytic (e.g., a mollified approximation of the original datum) and it satisfies the inequality
\eqref{EQ20}, i.e.,
  \begin{equation}
   \sum_{m=0}^{\infty}
    \Vert \partial_{1}^{m}\nabla v_0\Vert_{H^{r}}
     \frac{\delta^{m}}{m!^{s}}
     \le M
   \label{EQ21}
  \end{equation}
for some $\delta>0$ and $s\ge1$. 
Then by \cite{BBZ,KukavicaVicol11b} we know that the solution is real-analytic on $[0,T_1)$,
where $T_1>0$ (cf.~\eqref{eq:T1}) is the time of existence of the solution $v$ in $H^{r+1}$, which under the assumptions of the theorem
may be taken independently of the mollification parameter, and in particular it is infinite when $d=2$.
Thus $B_{m}(t)<\infty$ for all $t \in [0,T_1)$ and all $m \geq 0$.

Let $m_0\geq 0$ be an arbitrary integer,
and define
$\overline B_m=B_m$ for $m\in\{0,1,\ldots,m_0\}$ and
$\overline B_m=0$ for $m\in\{m_0+1,m_0+2,\ldots\}$.
Similarly, denote by 
$\overline \Omega_m$ the
same type of truncation corresponding to $\Omega_m$, for all integers $m\geq 0$.
Then $\overline B_m$ and $\overline \Omega_m$
satisfy the same recursion relation
\eqref{eq:Bm:1}, i.e.,
  \begin{align*}
   \overline B_m 
    &\leq C_1 \overline\Omega_m 
     + C_1 T^{1/2}  (1+ \overline B_0 + T^{1/2} \overline B_0 
     + T \overline B_0^2) \overline B_m \notag\\
   &\qquad + C_1 T^{1/2} (1+T^{1/2}) \sum_{0 < j<m} {m \choose j}
   \overline B_j \overline B_{m-j} \notag\\
   &\qquad + C_1 T^{3/2}  \sum_{0< |(j,k)| < m} {m\choose j~k} 
       \overline B_j \overline B_k \overline B_{m-j-k}
  \end{align*} 
  for all $m\geq 0$.
Denote
\[
   \overline S_{m_0}(t)
    = \sum_{m=0}^{\infty}
        \frac{\overline B_m(t)\delta^{m}}{m!^{s}}
    = \sum_{m=0}^{m_0}
        \frac{B_m(t)\delta^{m}}{m!^{s}}.
\]
Note that
$\overline S_{m_0}$ is a continuous function of time and
  \begin{equation}
   \overline S_{m_0}(0)
    \le  M
   \label{EQ24}
  \end{equation}
Following the derivation in \eqref{eq:Bm:3}, we then obtain
  \begin{align*}
   \overline S_{m_0}(t)
     & \le
    2 C_1 M + 2 C_1 T^{1/2} (1+T^{1/2}) \overline S_{m_0}^2 
      + 2 C_1 T^{3/2} \overline S_{m_0}^3
  \end{align*}
for all $t\ge0$.
By \eqref{EQ24}
and the continuity of $\overline S_{m_0}(t)$, we get
  \begin{equation}
   \overline S_{m_0}(T)
   \le
   4 C_1 M
   \label{EQ26}
  \end{equation}
provided that $T < T_1$ is chosen to obey \eqref{eq:T1}, \eqref{eq:T2}, and \eqref{EQ27}.
The bound \eqref{EQ26} may be rewritten as
\[
   \sum_{m=0}^{m_0}
     \frac{B_m(t)}{m!^{s}}\delta^{m}
     \le 4 C_1 M
\]
for all $t\in[0,T]$, with $T$ as above.
Finally, since $m_0 \geq 0$ is arbitrary,
from the monotone convergence theorem we obtain
\[
   \sum_{m=0}^{\infty}
     \frac{B_m(t)}{m!^{s}}\delta^{m}
     \le 4 C_1 M
\]
for all $t\in[0,T]$. Passing to zero in the mollification approximation completes the proof.
\end{remark}

\section{Local in time persistence of the Lagrangian Gevrey radius}
\label{sec:isotropic}
In this section we prove Theorem~\ref{thm:isotropic}. For simplicity of the presentation, we give here the proof for $d=3$. 
Fix $s \geq1$ and $\delta >0$ so that $\nabla v_0 \in G_{s,\delta}$ with norm $M$, that is, the quantity
\begin{align*}
\Omega_m:= \sum_{|\alpha|=m} \|\partial^\alpha \nabla v_0\|_{H^r} 
\end{align*}
obeys
\begin{align}
\sum_{m\geq 0} \Omega_m \frac{\delta^m}{m!^s} \leq M
\label{eq:aniso:data:3D}
\end{align}
Fix $T>0$, to be chosen later sufficiently small in terms of $M$, $s$,
and $\delta$. Similarly to the previous section for $m\geq 0$ define 
\begin{align}
V_m &= V_m(T) = \sup_{t\in [0,T]} \sum_{|\alpha|=m} \| \partial^\alpha \nabla v(t)\|_{H^r} \label{eq:Vm:def:3D}, \\
Z_m &= Z_m(T) = \sup_{t\in [0,T]} t^{-1/2} \sum_{|\alpha|=m} \| \partial^\alpha (\aaa(t)-I) \|_{H^r} \label{eq:Zm:def:3D}.
\end{align}

In order to estimate $\nabla v$ and its derivatives, we use the three-dimensional curl-div system \eqref{eq:vort:3D:2} and \eqref{eq:vort:3D:1} to write
\begin{align}
(\curl v)^m &= \eps_{mlk} \partial_l v^k = \omega_0^m + \eps_{ilk} (\delta_{im}-\aaa_i^m) \partial_l v^k + \eps_{mjk}(\delta_{jl} - \aaa_j^l)\partial_l v^k \notag \\
&\qquad \qquad \qquad \quad- \eps_{ijk}(\delta_{im} - \aaa_i^m)(\delta_{jl}-\aaa_j^l) \partial_l v^k \label{eq:3D:curl}\\
\ddiv v &= (\delta_{ik} - \aaa_i^k) \partial_k v^i. \label{eq:3D:div}
\end{align}
From \eqref{eq:3D:div}--\eqref{eq:3D:curl} we conclude that for $\alpha \in {\mathbb N}_0^3$ we have
\begin{align*}
\Vert  \partial^{\alpha}\nabla v\Vert_{H^{r}}
&\leq C\|\partial^{\alpha} \omega_0^m \|_{H^r} + C \|  \partial^{\alpha} ( \eps_{ijk} (\delta_{im} - \aaa_i^m) (\delta_{jl} - \aaa_j^l) \partial_l v^k ) \|_{H^r} \notag\\
&\qquad +C \|  \partial^{\alpha} ( \eps_{mjk}(\delta_{jl}-\aaa_{j}^{l}) \partial_l v^k)\|_{H^r} + C \|  \partial^{\alpha} (\eps_{ijk} (\delta_{im} - \aaa_{i}^m) \partial_j v^k ) \Vert_{H^{r}} \notag\\
&\qquad + C \| \partial^\alpha  ((\delta_{ik} -\aaa_i^k) \partial_k v^i)\Vert_{H^{r}}.
\end{align*}
Summing the above inequality over all multi-indices with $|\alpha|=m$ and taking a supremum over $t \in [0,T]$ we arrive at
\begin{align}
V_m
&\leq C \Omega_m + C T Z_m Z_0 V_0  + C T Z_0^2 V_m  + C T^{1/2} Z_0 V_m + C T^{1/2} Z_m V_0 \notag\\
&\quad  + C T^{1/2} \sum_{0<j<m} \sum_{|\alpha|=m, |\beta|=j, \beta \leq \alpha} {\alpha \choose \beta} \sup_{t\in [0,T]} \left(t^{-1/2}\|\partial^{\beta}(\aaa-I) \|_{H^r}   \|\partial^{\alpha-\beta} \nabla v\|_{H^r} \right) \notag\\
&\quad  + C T \sum_{0 < (j,k) < m} \sum_{|\alpha|=m, |\beta| = j, \beta \leq \alpha, |\gamma|={m-j-k}, \gamma \leq \alpha-\beta} {\alpha \choose \beta~\gamma} \notag\\
&\qquad \qquad \times \sup_{t\in [0,T]} \left(t^{-1/2}\|\partial^{\beta}(\aaa-I) \|_{H^r} t^{-1/2}\|\partial^{\gamma}(\aaa-I) \|_{H^r}   \|\partial^{\alpha-\beta-\gamma} \nabla v\|_{H^r} \right) \notag\\
&\leq C \Omega_m + C T Z_m Z_0 V_0  + C T Z_0^2 V_m  + C T^{1/2} Z_0 V_m + C T^{1/2} Z_m V_0 \notag\\
&\quad + C T^{1/2} \sum_{0<j<m} {m \choose j} Z_j V_{m-j}  + C T \sum_{0 < (j,k) < m} {m \choose j~k} Z_j Z_k V_{m-j-k},
\label{eq:Vm:3D}
\end{align}
for all $m \geq 1$.
In \eqref{eq:Vm:3D} we have used that if $\{ a_\alpha\} , \{ b_\alpha\} , \{c_\alpha\}$ are non-negative multi-indexed sequences, {then}
\begin{align}
\sum_{|\alpha|=m, |\beta|=j, \beta \leq \alpha} {\alpha \choose \beta} a_\beta b_{\alpha -\beta} 
\leq {m\choose j} \left( \sum_{|\beta|=j} a_\beta \right) \left( \sum_{|\gamma|=m-j} b_\gamma \right) 
\label{eq:comb:2D}
\end{align}
and
\begin{align} 
&\sum_{|\alpha|=m, |\beta| = j,\beta \leq \alpha, |\gamma|={m-j-k},\gamma\leq \alpha-\beta} {\alpha \choose \beta~\gamma}  a_\beta b_{\gamma} c_{\alpha -\beta-\gamma}\notag\\
&\qquad \qquad  \qquad\leq {m\choose j~k} \left( \sum_{|\beta|=j} a_\beta \right) \left( \sum_{|\gamma|=k} b_\gamma \right) \left( \sum_{|\alpha|=m-j-k} c_\alpha \right).
\label{eq:comb:3D}
\end{align}
These inequalities follow e.g.~from~\cite[Lemma
4.2]{KukavicaVicol11a} and \cite[Lemma A.1]{KukavicaVicol11b} and the
fact that ${\alpha \choose \beta} \leq {|\alpha| \choose |\beta|}$. 
Indeed, for \eqref{eq:comb:2D} (the proof of \eqref{eq:comb:3D} being
analogous), we have by using the substitution
$\gamma=\alpha-\beta$
  \begin{align}
   &\sum_{|\alpha|=m, |\beta|=j, \beta \leq \alpha} {\alpha \choose \beta} a_\beta b_{\alpha -\beta} 
   =
   \sum_{|\beta|=j}
    \sum_{|\gamma|=m-j}
     {\beta+\gamma \choose \beta} a_\beta b_{\gamma}
   \nonumber\\&\qquad
     \leq {m\choose j} 
     \sum_{|\beta|=j}
      \sum_{|\gamma|=m-j}
        a_\beta b_{\gamma}
    .
   \label{EQ30}
\end{align}
Note that when $m=0$, the bound \eqref{eq:Vm:3D} reads as
\begin{align} 
V_0 &\leq C_0 \Omega_0 + C_0 T^{1/2}(T^{1/2}  Z_0^2 + Z_0) V_0 
\label{eq:V0:3D}
\end{align}
for some constant $C_0>0$.

As in the two-dimensional case, in order to bound $Z_m$ we appeal to the integral formula for $Y(t) - I$, namely \eqref{eq:a:integrated}. We apply $\partial^\alpha$ to identity \eqref{eq:a:integrated}, sum over all multi-indices with $|\alpha|= m$, divide the resulting inequality by $t^{1/2}$ and take a supremum over $t \in [0,T]$. By appealing to \eqref{eq:comb:2D} and \eqref{eq:comb:3D}, similarly to \eqref{eq:Zm} we obtain
\begin{align}
Z_m 
&\leq C T^{1/2} (T Z_0^2 V_m + T Z_m Z_0 V_0 + T^{1/2} Z_0 V_m + T^{1/2} Z_m V_0 + V_m)  \notag\\
&\qquad + C T^{3/2} \sum_{0 < |(j,k)| < m} {m\choose j~k} Z_j Z_k V_{m-j-k} + C T  \sum_{j=1}^{m-1}  {m \choose j} Z_j V_{m-j}
\label{eq:Zm:3D}
\end{align}
when $m\geq 1$, and 
\begin{align} 
Z_0 \leq C_0 T^{1/2} (1+T^{1/2} Z_0)^2 V_0 
\label{eq:Z0:3D}
\end{align}
for $m=0$.

Once the recursive bounds \eqref{eq:Vm:3D}--\eqref{eq:V0:3D} and \eqref{eq:Zm:3D}--\eqref{eq:Z0:3D} have been established, we combine them with the initial datum assumption \eqref{eq:aniso:data:3D}, and as in Section~\ref{sec:anisotropic} obtain that there exists $T = T(M) > 0$ such that 
\begin{align*}
&\sum_{\alpha \geq 0} \sup_{t \in [0,T]} \left( \| \partial^\alpha \nabla v(t)\|_{H^r} +\frac{\| \partial^\alpha (\aaa(t)-I)\|_{H^r} }{t^{1/2}}   \right)\frac{\delta^{|\alpha|}}{|\alpha|!^s}  \notag\\ 
&\qquad \leq C \sum_{\alpha\geq 0} \|\partial^\alpha \nabla v_0\|_{H^r} \frac{\delta^{|\alpha|}}{|\alpha|!^s} = M,
\end{align*}
for some constant $C>0$. This concludes the proof of the a priori estimates needed to establish Theorem~\ref{thm:isotropic}.

\section{Example of Eulerian ill-posedness in the analytic class $G_{1,\delta}$} \label{sec:G1delta} 
In this section we prove Theorem~\ref{thm:example}. 
The idea of the proof is similar to the example given earlier in
Remark~\ref{rem:analyticity}, but addresses the fact that functions
whose holomorphic extension have a simple pole at $\pm i \delta$ do
not lie in $G_{1,\delta}$ (a fact encoded in the {\em sum over $m$}, as opposed to a supremum over $m$, defining our real-analytic norm, cf.~\eqref{eq:G:s:delta} and \eqref{eq:G:delta}). To address this issue we integrate such a real-valued function four times, so that the holomorphic extension to the strip of radius $\delta$ (where $\delta=1$) around the real-axis is also a $C^{2}$ function up to the boundary of this strip (cf.~\eqref{eq:big:H:def}). The proof then proceeds by cutting off in a Gaussian way at infinity (cf.~\eqref{eq:big:phi:def}), which is compatible with real-analyticity, and then periodize the resulting function so that we are dealing with a finite energy function (cf.~\eqref{eq:phi:def}). Verifying that the resulting function $\phi$ yields the necessary counterexample to prove the theorem follows then from a direct but slightly technical calculation.

Let $f,g$ be two $2\pi$-periodic functions. Recall (cf.~\cite{DiPernaMajda87a,BardosTiti10}) that the  function defined by
\[
u(x_1,x_2,x_3,t) = (f(x_2), 0 , g(x_1 - t f(x_2)))
\]
is an exact solution of the Euler equations posed on $\TT^3$, 
where $\TT = [-\pi,\pi]$ 
with the 
initial datum
\[
u_0(x_1,x_2,x_3) = (f(x_2),0,g(x_1)).
\]
Also, for a $2\pi$-periodic function $\phi$ and for $\delta > 0$ by definition we have that 
\begin{align}
\| \phi \|_{G_{1,\delta}} = \sum_{m = 0}^{\infty} \left( \sum_{|\alpha| = m} \| \partial^\alpha \phi \|_{H^2(\TT^3)} \right) \frac{\delta^m}{m!}.
\label{eq:G:delta}
\end{align}
Note that $H^2(\TT^3) \subset C^{0}(\TT^3)$ in view of the Sobolev embedding. Without loss of generality we fix $\delta = 1$ throughout this section.

We start with a few considerations on the real line $\RR$. For a function $F \in L^1(\RR)$ we normalize the Fourier transform as
\[
\hat F(\xi) = \frac{1}{\sqrt{2 \pi}} \int_{\RR} F(x) e^{- i x \xi} dx.
\]
Consider the two decaying real-analytic functions
\[   
h_1(x) = \sqrt{\frac{2}{\pi}} \frac{1}{1+ x^2}
\]
and
\[
   h_2(x) = \frac{1}{\sqrt{2}} \exp\left(- \frac{x^2}{4}\right).
\]
These functions have explicit Fourier transforms that are given by
  \begin{equation}
   \hat h_1(\xi) =  \exp(- |\xi|)
   \label{EQ06}
  \end{equation}
and
  \begin{equation}
   \hat h_2(\xi) = \exp(- |\xi|^2).
   \label{EQ07}
  \end{equation}
Define 
\begin{align}
h(x) = h_1(x) - \left(1 - (-\Delta)^{1/2} +\frac{3}{2} (-\Delta) - \frac{7}{6} (-\Delta)^{3/2}\right) h_2(x)
\label{eq:h:def:def}
\end{align}
In view of the above formulae we have that
\[
\hat h(\xi) =   \exp(-|\xi|) - \left(1 - |\xi| +\frac{3}{2}|\xi|^2 - \frac{7}{6} |\xi|^3 \right) \exp(-|\xi|^2).
\]
Note that
\[
\hat h(\xi) = \frac{25 |\xi|^4}{24} + O(|\xi|^5) \qquad \mbox{as} \qquad |\xi| \to 0
\]
and
\[
\hat h(\xi) = \exp(-|\xi|) + O\left(\exp\left(-\frac{|\xi|^2}{2}\right)\right) \qquad \mbox{as} \qquad |\xi| \to \infty.
\]
Lastly, we define
\begin{align}
H(x) = \int_0^x \int_0^{x_1} \int_0^{x_2} \int_{0}^{x_3} h(x_4) dx_4\,d x_3 \, dx_2 \,dx_1,
\label{eq:big:H:def}
\end{align}
so that 
\begin{align}
\frac{d^4}{dx^4} H(x) = h(x).
\label{eq:H:d:4}
\end{align}
By taking the Fourier transform of the above equation we arrive at
\begin{align}
\hat{H}(\xi) = \frac{\hat h(\xi)}{(i\xi)^4} = \frac{1}{|\xi|^4}
   \left(\exp(-|\xi|) - \left(1- |\xi| +\frac{3}{2}|\xi|^2 - \frac{7}{6} |\xi|^3\right) \exp(-|\xi|^2)\right).
\label{eq:H:hat}
\end{align}
Clearly,
\begin{align}
\sup_{|\xi|\leq 1} |\hat H(\xi)| + \sup_{|\xi|\geq 1} \left( |\xi|^4 \exp(|\xi|) |\hat H(\xi)| \right) \leq C_0
\label{eq:H:bound}
\end{align}
for some constant $C_0>0$.
The function $H$ however is not in $L^1$ since it grows 
as $|x| \to \infty$, and the above computations 
are formal.
To fix this issue, we set
\begin{align}
\Phi(x) = \exp\left( - \frac{x^2}{2} \right) H(x).
\label{eq:big:phi:def}
\end{align}
This function is smooth, and decays as $|x| \to \infty$. Moreover, in
view of \eqref{eq:H:hat} and using the explicit Fourier transform of the Gaussian, we have
\begin{align*}
\hat \Phi(\xi) &= \int_{\RR} \exp\left( -\frac{(\xi-\eta)^2}{2} \right) \hat H(\eta) d\eta  \notag\\
&= \int_{\RR} \exp\left( -\frac{(\xi-\eta)^2}{2} \right) \frac{1}{|\eta|^4} \left(\exp(-|\eta|) - \left(1- |\eta| +\frac{3}{2}|\eta|^2 - \frac{7}{6} |\eta|^3\right) \exp(-|\eta|^2) \right)d\eta.
\end{align*}
We claim that 
\begin{align}
\sup_{|\xi|\leq 1} |\hat \Phi(\xi)| + \sup_{|\xi|\geq 1} \left( |\xi|^4 \exp(|\xi|) |\hat \Phi(\xi)| \right) \leq C_1
\label{eq:phi:bound}
\end{align}
for some universal constant $C_1>0$.
In order to check whether \eqref{eq:phi:bound} holds, we write 
\begin{align*}
|\xi|^4 \exp(|\xi|) \hat \Phi(\xi) &= - \int_{\RR} \exp\left( -\frac{(\xi-\eta)^2}{2} \right)\exp(|\xi|-|\eta|) |\xi|^4  \notag\\
&\qquad \qquad \times \frac{ 1  - (1- |\eta| +\frac{3}{2}|\eta|^2 - \frac{7}{6} |\eta|^3)  \exp(-|\eta|^2 + |\eta|)}{|\eta|^4} d\eta,
\end{align*}
decompose the above integral in the regions
\[
\left\{ |\eta|\leq \frac 14 \right\}, \quad \left\{ \frac 14 \leq |\eta|\leq |\xi|^{3/4}\right\}, \quad \left\{|\xi|^{3/4} \leq |\eta|\leq |\xi| \right\}, \quad \left\{ |\eta|\geq |\xi| \right\}
\]
and use both the decay resulting from the Gaussian factor and the
decay coming from \eqref{eq:H:bound}.

A useful observation that shall be needed below is that we have 
\[
\bigl\| |\xi|^k \exp(-|\xi|) \bigr\|_{L^2(\RR)} = \frac{ \sqrt{(2k)!}}{2^k} 
\]
which by Stirling's estimate
  \begin{equation}
   (2\pi)^{1/2}
   n^{n+1/2}e^{-n}
   \le
   n!
   \le
   e n^{n+1/2}
   e^{-n}
   \comma 
   n\in{\mathbb N}
   \label{EQ01}
  \end{equation}
yields
\begin{align}
\frac{1}{k!} \bigl\| |\xi|^k \exp(-|k|) \bigr\|_{L^2(\RR)} 
\le \frac{1}{k^{1/4}}.
\label{eq:Stirling}
\end{align}

Now, we proceed to construct a periodic function with a finite
$G_{1,1}$ norm.
First, we build a $2\pi$-periodic function $\phi$ by using the Poisson summation applied to the function $\Phi$. More precisely, let
\begin{align}
\phi(x) = \sum_{m = -\infty}^{\infty} \Phi(x - 2 m \pi).
\label{eq:phi:def}
\end{align}
Clearly $\phi$ is periodic, and its Fourier series coefficients obey
\begin{equation}
\hat{\phi}(k) = \frac{1}{\sqrt{2\pi}} \hat\Phi(k),
\end{equation}
for all $k \in \ZZ$. 

Therefore, using estimates \eqref{eq:phi:bound} and \eqref{eq:Stirling},
with the Poisson summation formula, we have that 
\begin{align*}
\|\phi\|_{G_{1,1}} 
&= 
\sum_{n \geq 0} \left\| \frac{d^n}{dx^n} \phi\right\|_{H^2(\TT)} \frac{1^n}{n!} \notag\\
&\leq C \|\phi\|_{H^6(\TT)} + C \sum_{n \geq 5} \left( \left\| \frac{d^n}{dx^n} \phi\right\|_{L^2(\TT)} + \left\| \frac{d^{n+2}}{dx^{n+2}} \phi\right\|_{L^2(\TT)} \right) \frac{1}{n!}  \notag\\
&\leq C \|\phi\|_{H^6(\TT)} +  C \sum_{n \geq 5} \left( \| |k|^n \hat\phi(k) \|_{L^2(\ZZ)} + \| |k|^{n+2} \hat\phi(k) \|_{L^2(\ZZ)} \right) \frac{1}{n!}  \notag\\
&\leq C \|\phi\|_{H^6(\TT)} + C \sum_{n \geq 5} \left( \| |\xi|^n \hat\Phi(\xi) \|_{L^2(|\xi|\geq 1)} + \| |\xi|^{n+2} \hat\Phi(\xi) \|_{L^2(|\xi|\geq 1)} \right) \frac{1}{n!}
\end{align*}
and thus
\begin{align}
\|\phi\|_{G_{1,1}} 
&\leq
 C \|\phi\|_{H^6(\TT)} 
\notag\\&\qquad
+ C \sum_{n \geq 5}  \biggl(  \| |\xi|^{n-4} \exp(-|\xi|) \|_{L^2(|\xi|\geq 1)} 
+ \| |\xi|^{n-2} \exp(-|\xi|) \|_{L^2(|\xi|\geq 1)} \biggr)\frac{1}{n!}  \notag\\
&\leq C \|\phi\|_{H^6(\TT)} + C \sum_{n \geq 5}  \left( \frac{(n-4)!}{(n-4)^{1/4}} + \frac{(n-2)!}{(n-2)^{1/4}} \right)\frac{1}{n!}  \notag\\
&\leq C \|\phi\|_{H^6(\TT)}  + C \sum_{n \geq 5}\frac {1}{n^{9/4}} \notag\\
&\leq C_\phi < \infty.
\label{eq:phi:G11}
\end{align}
Note that $\|\phi\|_{G_{1,\delta}} = \infty$ for any  analyticity radius $\delta>1$, since 
\[
\sum_{n\geq 5} \frac{(n-2)!}{(n-2)^{1/4}} \frac{\delta^n}{n!} = \infty
\]
whenever $\delta>1$, and the estimate in \eqref{eq:phi:G11} may also be turned into lower bounds.


{\begin{proof}[Proof of Theorem~\ref{thm:example}]
Consider
\begin{align}
g(x)= \phi(x)
\label{eq:g:def}
\end{align}
where $\phi$ is as given in \eqref{eq:phi:def}, 
and define
\begin{align}
f(x) = \sin x
\label{eq:f:def}
\end{align}
Since $f$ is entire, we have that $\|f\|_{G_{1,\delta}} <\infty$ for any $\delta >0 $.
With the definitions of $f$ and $g$ above, it follows from \eqref{eq:phi:G11} that 
\[
\|u_0\|_{G_{1,1}} < \infty.
\]
Note that in view of the periodicity in $x_1$ and $x_2$, the functions $f(x_2)$ and $g(x_1)$ have finite energy (i.e., $H^2(\TT^3)$ becomes $H^2(\TT)$, up to a multiplicative constant), and the multi-index summation in \eqref{eq:G:delta} becomes a simple sum over $n \geq 0$. Thus \eqref{eq:IC} is established.

In order to establish \eqref{eq:t>0}, we assume,
for the sake of obtaining a contradiction,
that for some $t \in (0,1/10]$ we have $\|u(t)\|_{G_{1,1}}< \infty$. We fix this value of $t \in (0,1/10]$ throughout this proof. 

Consider the function
\begin{align}
\psi(x_1,x_2) := \partial_{x_1}^3 u_3(x_1,x_2,x_3,t) = g'''(x_1 - t f(x_2)).
\label{eq:psi:3:def}
\end{align}
The inequality $\|u(t)\|_{G_{1,1}} <\infty $ implies 
\[
\sum_{\alpha \geq 0} \|\partial^\alpha \psi\|_{H^2} \frac{1}{(|\alpha|+3)!} <\infty.
\]
It follows that for any $R \in (0,1)$, the joint in $(x_1,x_2)$ power 
series of $\psi$ at the origin
\begin{align}
\psi(x_1,x_2) = \sum_{m,n\geq 0} a_{m,n} x_1^m x_2^n
\label{eq:psi:power:series}
\end{align}
converges absolutely in the closed square of side length $R$ at the origin 
\[
{\mathcal C}_R = \{ (x_1,x_2) \colon |x_1|\leq R, |x_2|\leq R\}
\]
and defines a real-analytic function of two variables in this square.
Thus, we may consider the complex extension
\[
\psi(z_1,z_2) = \psi(x_1+iy_1,x_2+iy_2) = \sum_{m,n\geq 0} a_{m,n} z_1^m z_2^n
\]
which converges absolutely when $|z_1|\leq R$ and $|z_2| \leq R$.
Fix 
  \begin{equation}
   R_t = 1- \frac{3 t }{4}
   \label{EQ08}
  \end{equation}
which clearly belongs to $(0,1)$, and is thus an allowable choice for $R$. Also, fix
\[
x_1=0 \qquad \mbox{and} \qquad z_2 = 0 + i \log 2.
\]
Since
$t\in(0,1/10)$, we have  $|z_2| = \log 2 < R_t$, so that by the above consideration, 
\begin{align}
\lim_{y_2 \to - R_t^-} |\psi(i y_2, i \log 2)| < \infty.
\label{eq:bad:assumption}
\end{align}
In order to complete the proof by contradiction, 
we shall next show that in fact \eqref{eq:bad:assumption} is false, and in fact we have
\begin{align}
\lim_{y_2 \to - R_t^-} |\psi(i y_2, i \log 2)| = \infty.
\label{eq:BAD:TODO}
\end{align}
The remainder of this proof is devoted to establishing \eqref{eq:BAD:TODO}.

First observe that $\sin(i \log 2) = 3 i /4$, and thus
\[
\psi(i y_2, i \log 2) = \phi'''(i (y_2 - 3t/4)).
\]
Next, note that by the definition of $R_t$, \eqref{EQ08}, we have
\[
y_2 - \frac{3t}{4} \to -1^+ \qquad \mbox{as} \qquad y_2 \to - R_t^-.
\]
Thus, proving \eqref{eq:BAD:TODO} amounts to showing that 
\begin{align}
\lim_{y \to -1^+} |\phi'''(i y)| = \infty
\label{eq:BAD}
\end{align}
which is what we establish below.
In view of \eqref{eq:big:phi:def}, \eqref{eq:phi:def}, and the Leibniz rule, we have that 
\begin{align}
\phi'''(z) 
= \Phi'''(z) + \sum_{m\in \ZZ\setminus\{0\}} \Phi'''(z - 2 m \pi)
\label{eq:phi:d:3}
\end{align}
and
\begin{align}
\Phi'''(z) = \exp\left(-\frac{z^2}{2} \right) 
  \bigl(H'''(z) - 3 z H''(z) + 3 (z^2-1) H'(z) + z(3-z^2) H(z) \bigr)
\label{eq:big:phi:d:3}
\end{align}
for any complex number $z$ with $|z|<1$.
Next, note that by \eqref{eq:h:def:def} and \eqref{eq:H:d:4}, we have 
\[
H^{(iv)}(z) = h_1(z) + \EE_0(z)
\]
where 
  \begin{equation}
    \EE_0(z) = \left(1 - (-\Delta)^{1/2} +\frac{3}{2} (-\Delta) - \frac{7}{6} (-\Delta)^{3/2}\right) h_2(z)
   \label{EQ10}
  \end{equation}
is an entire function (since its Fourier coefficients are given by a polynomial times a decaying Gaussian). Moreover, letting $\EE_1(z) = \int_0^z \EE_0(w_1) dw_1$, $\EE_2(z) = \int_0^z \!\! \int_0^{w_1} \EE_0(w_2) dw_2\, dw_1$, $\EE_3(z) = \int_0^z \!\! \int_0^{w_1} \!\! \int_0^{w_2} \EE_0(w_3) dw_3\, dw_2\, dw_1$, and $\EE_4(z) = \int_0^z \!\! \int_0^{w_1}\!\! \int_0^{w_2} \!\!\int_0^{w_3} \EE_0(w_4) dw_4\, dw_3\, dw_2\, dw_1$, 
we immediately obtain that
\begin{align}
\EE(z) = \EE_1(z) - 3 z \EE_2(z) + 3 (z^2-1) \EE_3(z) + z(3-z^2) \EE_4(z) 
\label{eq:big:E:def}
\end{align}
is also an entire function. On the other hand, we may explicitly 
compute the integrals of $h$ as
\begin{align*}
\HH_1(z) &= \int_0^z h_1(w_1) dw_1 = \sqrt{\frac{2}{\pi}} \arctan z \\
\HH_2(z) &=\int_0^z \!\! \int_0^{w_1} h_1(w_2)  dw_2\, dw_1 = \sqrt{\frac{2}{\pi}}  \left(z \arctan z - \frac 12 \log(1 + z^2) \right) \notag \\
\HH_3(z) &=\int_0^z\!\! \int_0^{w_1}\!\! \int_0^{w_2} h_1(w_3) dw_3\,  dw_2\, dw_1 = \frac 12 \sqrt{\frac{2}{\pi}}  \left(z + (z^2-1) \arctan z - z \log(1 + z^2) \right)\notag \\
\HH_4(z) &=\int_0^z\!\! \int_0^{w_1}\!\! \int_0^{w_2} \!\! \int_0^{w_3} h_1(w_4) dw_4\, dw_3\,  dw_2\, dw_1 \notag \\
&= \frac{1}{12} \sqrt{\frac{2}{\pi}}  \left(5z^2 + 2z (z^2-3) \arctan z - (3 z^2-1) \log(1 + z^2) \right)
\end{align*}
which implies that
\begin{align}
\HH(z) &:=\HH_1(z) - 3 z \HH_2(z) + 3 (z^2-1) \HH_3(z) + z(3-z^2) \HH_4(z) \notag \\
&=  \frac{1}{12} \sqrt{\frac{2}{\pi}} \Big(z (-18 + 33 z^2 - 5 z^4) \notag \\
&\qquad \quad + 2 (15 - 45 z^2 + 15 z^4 - z^6) \arctan z +  z (39 - 28 z^2 + 3 z^4) \log(1+z^2) \Big).
\label{eq:big:HH:def}
\end{align}
In summary, with the definition of $\EE$ in \eqref{eq:big:E:def} and of $\HH$ in \eqref{eq:big:HH:def}, we have that 
\begin{align}
\Phi'''(z) = \exp\left(-\frac{z^2}{2} \right) (\HH(z) + \EE(z)).
\end{align}
Letting $z = i y$, and using that $\arctan(i y) = i {\rm arctan} y$, 
we arrive at 
\begin{align}
\Phi'''(iy) = \exp\left(\frac{y^2}{2}\right)(\HH(iy) + \EE(iy)).
\end{align}
Since $\EE$ is an entire function, we have that $\sup_{y \in [-1,0]} |\EE(iy)| \leq C < \infty$. 
Writing
\begin{align}
\HH(iy) &=  i \sqrt{\frac{2}{\pi}} y^2 \arctanh y + \frac{i}{12} \sqrt{\frac{2}{\pi}} y  (-18 - 33 y^2 - 5 y^4) \notag \\
&\quad + \frac{i}{12} \sqrt{\frac{2}{\pi}} (39 + 28 y^2 + 3 y^4) \Big( 2  \arctanh y +  y  \log(1-y^2) \Big) \notag\\
&\quad + \frac{i}{6} \sqrt{\frac{2}{\pi}} (- 24 + 11 y^2 + 12 y^4 + y^6) \arctanh y 
\label{eq:HH:iy}
\end{align}
and observing
$ \lim_{y \to -1^+} \left( 2 \arctanh y + y \log(1-y^2) \right) = - \log 4$ and $ \lim_{y \to -1^+} (y+1)  \arctanh y  =0$, 
we arrive at
\begin{align}
\lim_{y\to -1^+} |\HH(iy)| = \infty
\label{eq:HH:-1:blowup}
\end{align}
since $\arctanh$ has a logarithmic singularity at $y = -1$. Combined with the above, it follows from \eqref{eq:HH:-1:blowup} that 
\begin{align}
\lim_{y\to -1^+} |\Phi'''(iy)| = \infty
\end{align}
which in turn shall imply that \eqref{eq:BAD} holds. 

Indeed, the only remaining part of the proof is to show that 
\[
\lim_{y\to -1^+}  \sum_{m\in \ZZ\setminus\{0\}} \left|\Phi'''(iy - 2 m \pi) \right| < \infty.
\]
The above holds since for each $m\neq 0$ we have that
\[
|\HH(i y - 2m\pi)| + |\EE(iy - 2 m \pi)| 
\leq 
P(m)
\]
uniformly for  $|y| \in [1/2,1]$,
where $P$ is a polynomial, and since 
\[
\left| \exp\left( - \frac{(iy - 2 m \pi)^2}{2} \right) \right| \leq \exp\left(\frac 12 - 2 m^2 \pi^2\right)
\]
which makes the sum over $m\neq 0$ finite.
In order to obtain the first bound,
we use \eqref{eq:HH:iy} and the formula
  \begin{equation*}
   \arctan z 
   = \frac12 i \bigl(
                \log (1-iz) - \log (1+iz)
               \bigr)
  \end{equation*}
where the complex domains of the above logarithms
are cut on $[0,\infty)$ and $(-\infty,0]$
respectively. 
\end{proof}


\section*{Acknowledgments} 
The authors are thankful to Alexander Shnirelman for helpful suggestions.
The work of PC was supported in part by the NSF grants DMS-1209394 and DMS-1265132, 
IK was supported in part by the NSF grant DMS-1311943,
while the work of VV was supported in part by the NSF grants DMS-1348193, DMS-1514771, and by an Alfred P.~Sloan Research Fellowship.


\end{document}